\renewcommand{\epsilon}{\varepsilon}
\newcommand{\boH}{\mathcal{H}}
\newcommand{\boN}{\mathcal{N}}
\newcommand{\R}{\mathbb{R}}
\newcommand{\Z}{\mathbb{Z}}
\newcommand{\C}{\mathbb{C}}
\renewcommand{\H}{\mathbb{H}}
\renewcommand{\S}{\mathbb{S}}
\newcommand{\N}{\mathbb{N}}
\newcommand{\eps}{\varepsilon}
\newcommand{\dd}{\mathrm{d}}
\newcommand{\Ome}{\Omega}
\newtheorem{thm}{Theorem}
\newtheorem{cor}[thm]{Corollary}
\newtheorem{prop}[thm]{Proposition}
\newcommand{\inter}[1]{\overset{\circ}{#1}}
\newcommand{\barre}[1]{\overline{#1}}
\renewcommand{\phi}{\varphi}
\newtheorem*{thm*}{Theorem}
\newtheorem{claim}{Claim}
\newtheorem*{claim*}{Claim}
\theoremstyle{remark}
\newtheorem{remarq}{Remark}
\newtheorem*{rem*}{Remark}
\newcounter{remark}
\newcounter{case}
\newcounter{construction}
\newcounter{fact}
\newcounter{step}
\DeclareMathOperator{\Ric}{Ric}
\theoremstyle{plain}
\newtheorem*{thA}{Theorem A}
\newtheorem*{thB}{Theorem B}
\newtheorem*{thC}{Theorem C}
\newtheorem*{thD}{Theorem D}
\title{Minimal surfaces in finite volume non compact hyperbolic $3$-manifolds}
\author{Pascal Collin, Laurent Hauswirth, Laurent Mazet, Harold Rosenberg
\thanks{The authors were partially supported by the ANR-11-IS01-0002 grant.}} 
\begin{document}
\maketitle

\begin{abstract}
We prove there exists a compact embedded minimal surface in a complete finite
volume hyperbolic $3$-manifold $\boN$. We also obtain a least area,
incompressible, properly embedded, finite topology, $2$-sided surface. We prove
a properly embedded minimal surface of bounded curvature has finite topology.
This determines its asymptotic behavior. Some rigidity theorems are obtained.
\end{abstract}

\section{Introduction}

There has been considerable progress on the study of properly embedded minimal
surfaces in euclidean $3$-space. We now know all such orientable surfaces that
are planar domains; they are planes, helicoids, catenoids and Riemanns'minimal
surfaces. Also we understand the geometry of properly embedded periodic minimal
surfaces, that are finite topology in the quotient.

In hyperbolic $3$-space, there is no classification of this nature. A continuous
rectifiable curve in (the boundary at infinity of $\H^3$) is the asymptotic
boundary of a least area embedded simply connected surface.

In this paper we study the existence of periodic minimal surfaces in $\H^3$. More
precisely, we consider surfaces in complete non compact hyperbolic $3$-manifolds
$\boN$ of finite volume. In the following of the paper, we will refer to such
manifolds $\boN$ as hyperbolic cusp manifolds. In a closed hyperbolic manifold
(or any closed Riemannian $3$-manifold), there is always a compact embedded
minimal surface \cite{Pit}. They cannot be of genus zero or one, but there are
many higher genus such surfaces. The existence and deformation theory of such
surfaces was initiated by K. Uhlenbeck \cite{Uhl}.

Hyperbolic cusp manifolds play an important
role in the theory of closed hyperbolic 3-manifolds. Many link complements in
the unit 3-sphere have such a finite volume hyperbolic structure. Given any $V >
0$, Jorgensen proved there are a finite number of such $\boN$ of volume $V$. Then
Thurston proved that a closed hyperbolic $3$-manifold of volume less than V can be
obtained from this finite number of manifolds $\boN$ given by Jorgensen, by
hyperbolic Dehn surgery on at least one of the cusp ends (see \cite{BePe} for
details).

We will prove there is a compact embedded minimal surface in any complete
hyperbolic $3$-manifold of finite volume. Since such a non compact manifold
$\boN$ is "not convex at infinity", minimization techniques do not produce such
a minimal surface. To understand this the reader can verify that on a complete
hyperbolic $3$-punctured $2$-sphere, there is no simple closed geodesic. In
dimension $3$, a min-max technique, together with several maximum principles in
the cusp ends of $\boN$, will produce compact embedded minimal surfaces.

We will give two existence results of embedded compact minimal
surfaces. 
\begin{thA}
There is a compact embedded minimal surface $\Sigma$ in $\boN$.
\end{thA}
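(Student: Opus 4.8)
The plan is to produce $\Sigma$ by a min--max (sweepout) construction carried out in a suitable \emph{compact} truncation of $\boN$, the non-compactness being controlled by maximum principles in the cusp ends. First I would fix notation from the thick--thin decomposition: outside a compact core, $\boN$ is a finite union of cusp ends, each isometric to a quotient of $\bigl(T^2\times[0,\infty),\,\dd t^2+e^{-2t}g_{0}\bigr)$ with $g_{0}$ a flat metric on $T^2$ and $t\to\infty$ pointing toward the puncture. The horospherical torus $T^2\times\{t\}$ has area $e^{-2t}\operatorname{Area}_{g_{0}}(T^2)\to 0$; this is exactly why area-minimizing sequences slide off into the cusps and plain minimization produces nothing, so a min--max argument is forced. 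A direct computation in a cusp gives $\Div(\partial_t)=-2$ and shows the horospherical tori have constant mean curvature with mean curvature vector $-2\,\partial_t$, pointing toward the compact core; hence for $t_{0}$ large the compact region $\boN_{0}:=\boN\setminus\bigcup_{i}\{t>t_{0}\}$ has strictly mean-convex boundary, a disjoint union of flat tori.

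Next I would establish the maximum principle that drives the confinement. If $\Sigma$ is a minimal surface lying in a cusp, with unit normal $\nu$, then using $\Div(\partial_t)=-2$ and $\operatorname{Hess} t=-(g-\dd t\otimes\dd t)$,
\[
\Delta_{\Sigma}t=\Delta_{\boN}t-\operatorname{Hess} t(\nu,\nu)=-1-\langle\nu,\partial_t\rangle^{2}\le -1<0 .
\]
Thus $t$ is strictly superharmonic on any minimal surface in a cusp, so no closed minimal surface is contained in a cusp; and, combined with the mean-convexity of $\partial\boN_{0}$, the tori $\{t=t_{0}\}$ are genuine two-sided barriers: a stationary integral varifold supported in $\boN_{0}$ that arises as a varifold limit of surfaces in $\Int\boN_{0}$ cannot touch $\partial\boN_{0}$, for otherwise the strong maximum principle would force a piece of it to coincide with the non-minimal torus $\{t=t_{0}\}$. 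I would also record, for later use and to pin down the topology, the Gauss equation consequence that the induced curvature of any minimal surface in $\boN$ is $K_{\Sigma}=-1+\kappa_{1}\kappa_{2}=-1-\kappa_{1}^{2}\le -1$, so by Gauss--Bonnet every closed minimal surface in $\boN$ has $\chi<0$.

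Then comes the min--max step. Choose a sweepout $\{\Sigma_{s}\}_{s\in[0,1]}$ of $\boN_{0}$ by surfaces (e.g.\ regular level sets of a Morse function), with $\Sigma_{0},\Sigma_{1}$ degenerate; since $\boN_{0}$ has non-trivial topology the width $W=\inf_{\{\Sigma_{s}\}}\sup_{s}\mathcal H^{2}(\Sigma_{s})$ is strictly positive. Running the Pitts min--max construction in the compact mean-convex region $\boN_{0}$ — the barriers of the previous paragraph keeping the almost-minimizing (replacement) sequences inside $\boN_{0}$ and keeping the varifold limit off $\partial\boN_{0}$ — yields a min--max sequence converging, as varifolds, to a smooth closed embedded minimal surface $\Sigma$, counted with positive integer multiplicities, with $0<\mathcal H^{2}(\Sigma)\le W$ (the lower area bound coming from the monotonicity formula together with $W>0$). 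By the barrier property $\Sigma\subset\Int\boN_{0}\subset\boN$, so $\Sigma$ is a compact embedded minimal surface in $\boN$ (with components of genus $\ge 2$, or non-orientable of $\chi<0$, by the Gauss--Bonnet remark).

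The main obstacle is precisely the interplay between the min--max machinery and the non-compact cusps: one must guarantee that the combinatorial replacement procedure never pushes mass into the thin part, where area is arbitrarily cheap, and that in the varifold limit no mass escapes toward infinity or collapses with multiplicity onto the (non-minimal) boundary tori. The inequality $\Delta_{\Sigma}t\le -1$ and the mean-convexity of $\{t\le t_{0}\}$ are exactly the inputs that make these maximum-principle arguments go through; a secondary and more routine point is verifying that the chosen sweepout of $\boN_{0}$ is topologically non-trivial so that $W>0$.
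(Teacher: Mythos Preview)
Your barrier argument rests on the claim that the truncation $\boN_0$ has strictly mean-convex boundary, but the sign is wrong: the mean curvature vector of a horosphere points \emph{into} the horoball, i.e.\ toward the cusp, so in your coordinates $\vec H = +2\,\partial_t$, not $-2\,\partial_t$. Geometrically, the cusp region $\{t>t_0\}$ is a quotient of a convex horoball and hence mean-convex; its complement $\boN_0=\{t\le t_0\}$ is therefore mean-\emph{concave}. This is precisely the point the paper flags when it writes that $\boN$ is ``not convex at infinity.'' Consequently the tori $\{t=t_0\}$ are barriers only from the cusp side---a minimal surface in $\{t>t_0\}$ cannot touch $\{t=t_0\}$ tangentially---and give you nothing for a stationary varifold sitting in $\boN_0$ and touching the boundary from inside. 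Your computation $\Delta_\Sigma t\le -1$ is correct and does rule out a closed minimal surface living entirely in a cusp (no interior minimum of $t$); but it places no constraint on how \emph{deep} a surface may go into a cusp before exiting, and in particular it does not prevent the min-max replacement procedure from pushing area out through $\partial\boN_0$.

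The paper circumvents the mean-concavity by a genuinely different construction. It \emph{closes up} each cusp by a metric Dehn filling (modifying the warping function $\Psi$ and attaching a solid torus at $z=L$), obtaining a closed manifold $\tilde\boN$ on which Almgren--Pitts min-max applies directly; the sweep-out is arranged so that the width $M_0$ is essentially independent of $L$. The resulting min-max surface $\Sigma\subset\tilde\boN$ is then confined to the original hyperbolic region in two steps. First, a monotonicity argument shows that any component meeting both $T_i(1)$ and $T_i(L)$ has area at least linear in $L$, so for $L\gg M_0$ no component crosses the neck; combined with the minimum principle (which the paper also uses) this forces $\Sigma$ out of the filling region. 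Second---and this is the substitute for your failed barrier---the paper invokes its Maximum Principle~II: if $\Lambda(C)$ is small enough, a compact embedded minimal surface in the cusp with boundary on $T(1-t_0)$ cannot pass $T(1)$. This is \emph{not} a mean-convexity statement; its proof reduces the unstable case to the stable one by a least-area replacement in a mean-convex region, and then uses curvature estimates for stable surfaces together with embeddedness and the small-$\Lambda$ hypothesis (Maximum Principle~I). That ingredient is what your argument is missing.
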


\begin{thB}
Let $S$ be a closed orientable embedded surface in $\boN$ which is not a
$2$-sphere or a torus. If $S$ is incompressible and non-separating, then $S$ is
isotopic to a least area embedded minimal surface.
\end{thB}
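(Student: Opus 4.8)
My plan is to run the classical isotopy-class minimization for incompressible surfaces (Meeks--Simon--Yau, Freedman--Hass--Scott) on the cusp manifold $\boN$, the only genuinely new point being to control the ends. I will show that a least area surface in the isotopy class of $S$ cannot send a nontrivial end up a cusp: inside a cusp, modelled as $\{z\ge z_{0}\}$ in the upper half space divided by a lattice $\Lambda$ of parabolics, a least area surface has area decaying like the area $|\Lambda|/z^{2}$ of the horospherical cross section, whereas an incompressible annular end running up the cusp has area decaying only like $1/z$; the mismatch forces the minimizer to be compact, hence, up to isotopy, to be $S$.

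First I would record the preliminaries. Since $S$ is orientable, closed and non-separating in the orientable manifold $\boN$, it is two-sided and $[S]\ne0$ in $H_{2}(\boN;\Z)$. Since $S$ is neither a sphere nor a torus, $\pi_{1}(S)$ is non-abelian; as $S$ is incompressible this prevents $S$, and any isotopic surface, from being pushed into a cusp end (whose fundamental group is $\Z^{2}$), and, together with the irreducibility of $\boN$, yields a positive lower bound $A_{0}$ for the area over the entire isotopy class of $S$. For each large $c$ I would form the compact manifold $\boN_{c}$ obtained from $\boN$ by cutting each cusp off along the horospherical torus $T_{c}=\{z=c\}$; the $T_{c}$ are flat tori of area $|\Lambda|/c^{2}$ and constant mean curvature $1$, and the $\boN_{c}$ exhaust $\boN$.

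Next I would minimize inside $\boN_{c}$. As $\boN$ is hyperbolic it is aspherical, so $\pi_{2}(\boN)=0$, $\boN$ is irreducible, and it contains no minimal sphere; $\boN_{c}$ is a compact irreducible $3$-manifold. Applying the Meeks--Simon--Yau/Freedman--Hass--Scott theory inside $\boN_{c}$ to the incompressible surface $S$ produces an embedded minimal surface $\Sigma_{c}\subset\boN_{c}$ which is least area in the isotopy class of $S$ in $\boN_{c}$: no component collapses, because $A_{0}>0$; no sphere splits off, because $\pi_{2}(\boN)=0$; and the limit is connected and isotopic to $S$ --- not to a boundary-parallel torus --- because $S$ is incompressible and not a torus. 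The key estimate is that for $c'\le c$ the surface $\Sigma_{c}\cap\{z\ge c'\}$, being incompressible with essential boundary on $T_{c'}$ in the product region $\{c'\le z\le c\}$, reduces (via the standard innermost-circle and boundary-compression moves, and because $S$ is not a torus) to a union of boundary-parallel annuli; replacing these by the flat annuli they cut off in $T_{c'}$, pushed slightly inward, is an isotopy and cannot decrease area, so $\mathrm{area}(\Sigma_{c}\cap\{z\ge c'\})\le|\Lambda|/(c')^{2}$. The same replacement applied to near-minimizers in $\boN$ shows that the infimum of area over the isotopy class of $S$ in $\boN$ equals $\lim_{c\to\infty}\mathrm{area}(\Sigma_{c})\ge A_{0}$. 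Since the $\Sigma_{c}$ are stable minimal surfaces of uniformly bounded area, standard compactness (curvature estimates for stable minimal surfaces, together with the area bound inside the cusps) gives subconvergence, as $c\to\infty$, to a minimal surface $\Sigma$ in $\boN$ with $\mathrm{area}(\Sigma\cap\{z\ge c'\})\le|\Lambda|/(c')^{2}$ for all $c'$ and with total area equal to that infimum.

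Finally I would show $\Sigma$ is compact, which is the crux. If $\Sigma$ had an end running up a cusp, then for all large $z$ the slice $\Sigma\cap T_{z}$ would contain an essential curve of $T_{z}$ (by incompressibility of the end), of length at least the systole of $T_{z}$, namely $\ell/z$ for a fixed $\ell>0$; since the hyperbolic gradient of $z$ has norm $z$, the coarea formula gives $\mathrm{area}(\Sigma\cap\{z\ge c'\})\ge\ell/c'$, which for $c'$ large contradicts the bound $|\Lambda|/(c')^{2}$. Hence $\Sigma$ has no ends: it is a closed embedded minimal surface in $\boN$, two-sided since $\boN$ and $S$ are orientable, isotopic to $S$ (as a limit of surfaces isotopic to $S$ with controlled topology), and of least area in its isotopy class since its area realizes the infimum. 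The main obstacle is this last block: making precise the topology of $\Sigma_{c}$ inside the cusps, controlling the possible degeneration of the $\Sigma_{c}$ as $c\to\infty$ (the ambient injectivity radius tends to zero there), and converting the mismatch between the $1/(c')^{2}$ and $1/c'$ area-decay rates into a genuine compactness statement; once confinement to a fixed compact region is in hand, what remains is a transcription of the closed-manifold theory.
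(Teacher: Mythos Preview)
Your approach differs genuinely from the paper's, and the difficulty you flag at the end --- confining the minimizer to a compact region --- is exactly the crux; but the gap in your plan occurs earlier, at the step where you invoke Meeks--Simon--Yau / Freedman--Hass--Scott inside $\boN_c$. The boundary tori $T_c$ have mean curvature vector pointing \emph{into the cusp}, i.e.\ out of $\boN_c$: the boundary of $\boN_c$ is mean-concave, not mean-convex. The existence theorems you cite require a barrier (mean-convex or at least minimal boundary) to keep a minimizing sequence from pushing against $\partial\boN_c$; with a mean-concave boundary nothing prevents this, and the paper remarks explicitly in the introduction that straight minimization fails in $\boN$ for exactly this reason. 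So as written you have not produced the surfaces $\Sigma_c$ on which all your subsequent estimates rest.

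The paper sidesteps this by modifying the metric in each cusp: it replaces $g_\H$ by $g_{\Psi_n}$ with $\Psi_n(z)=n\Psi(z/n)$ and $\Psi'(2)=0$, so that $T(2n)$ becomes a \emph{minimal} torus and Hass--Scott applies in $\boN(2n)$ to yield $\Sigma_n$. Confinement is then obtained not via your $|\Lambda|/(c')^2$ versus $\ell/c'$ area comparison, but by the Maximum Principle II (Proposition~\ref{prop:max2}): once $\Lambda(C)$ is small enough, any compact embedded minimal surface with boundary on $T(1-t_0)$ stays in $\{z\le1\}$. This forces $\Sigma_n\subset\boN(1)$ for every $n$, in the region where the metric was never changed; the $\Sigma_n$ then compete against each other and $\Sigma_1$ is already least-area for the hyperbolic metric. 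No limit $n\to\infty$, no area-decay analysis, and no separate compactness step is needed.

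Your $1/(c')^2$ versus $1/c'$ idea is sound and is closer in spirit to the non-compact argument (Theorem~\ref{th:isotopy}); but to run it here you would still first need the $\Sigma_c$ to exist, and for that you need a barrier --- which leads back to modifying the metric as the paper does.
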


Concerning properly embedded non compact minimal surfaces, there are already 
existence results due to Hass, Rubinstein and Wang \cite{HaRuWa} and Ruberman
\cite{Rub}. Using different arguments, we give an other proof of Ruberman's
minimization result.

\begin{thm*}
Let $S$ be a properly embedded, non compact, finite topology, incompressible,
non separating surface in $\boN$. Then $S$ is isotopic to a least area embedded
minimal surface.
\end{thm*}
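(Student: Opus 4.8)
\emph{Proof strategy.} The aim is to recover Ruberman's minimization theorem \cite{Rub} (compare \cite{HaRuWa}) by a direct argument in which the behaviour in the cusps is controlled by hand. Write $\boN=\boN_0\cup C_1\cup\dots\cup C_n$, with $\boN_0$ a compact core with torus boundary and each cusp $C_j$ isometric to $\T^2_j\times[0,\infty)$ with the warped product metric $dt^2+e^{-2t}h_j$, $h_j$ flat, the cusp point being at $t=\infty$. A computation shows that on every minimal surface $\Sigma$ one has $\Delta_\Sigma t=|(\partial_t)^\top|^2-2\le -1$, so $t$ is strictly superharmonic on minimal surfaces; equivalently, the horospherical tori $\{t=c\}$ have mean curvature vector $2\partial_t$, pointing into the cusp. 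First I would read off the structure of the ends of $S$: each end lies in some cusp $C_j$ and, being properly embedded there, satisfies $t\to\infty$ along it; it is an annulus by finite topology; since $t$ has no interior local minimum its level sets $\{t=s\}$ are, for $s$ large, single closed curves, essential in $\T^2_j$ by incompressibility; and since $S$ is embedded all its ends in $C_j$ are parallel to a single primitive class $\gamma_j$. After an ambient isotopy supported in the cusps I may therefore assume each end of $S$ equals a vertical totally geodesic annulus $\gamma_j\times[c_0,\infty)$, $\gamma_j$ a closed $h_j$-geodesic. Then $\mathrm{Area}(\gamma_j\times[c_0,\infty))=\ell_{h_j}(\gamma_j)e^{-c_0}<\infty$, so $\mathrm{Area}(S)<\infty$, and since $S$ is non-separating, $A_0:=\inf\{\mathrm{Area}(S'):S'\ \text{isotopic to}\ S\}\in(0,\infty)$.

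The key analytic point is that no area is lost into the cusps. Given a competitor $S'$ isotopic to $S$ and $c\ge c_0$ large, one isotopes $S'$ inside $\{t\ge c\}$ so that $S'\cap\{t\ge c\}$ becomes a union of $\sum_j k_j$ vertical totally geodesic annuli over the $\gamma_j$ (possible because $S'$ is isotopic to $S$); the resulting surface $\widetilde S'$ is again a competitor, so $\mathrm{Area}(\widetilde S')\ge A_0$, whereas $\mathrm{Area}(\widetilde S')=\mathrm{Area}(S'\cap\{t\le c\})+O(e^{-c})$ with implied constant depending only on the $\gamma_j$. Hence $\mathrm{Area}(S'\cap\{t\le c\})\ge A_0-O(e^{-c})$ for every competitor, and therefore for a minimizing sequence $S_i$ one has $\mathrm{Area}(S_i\cap\{t\ge c\})=O(e^{-c})$ uniformly in $i$. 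So, after modifying each $S_i$ outside $\{t\le c\}$ to be exactly vertical, the minimization is concentrated — up to an $O(e^{-c})$ error — in the fixed compact submanifold $\boN_c:=\boN_0\cup\bigcup_j(\T^2_j\times[0,c])$, with the ends held fixed.

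It remains to minimize in the compact part and pass to the limit $c\to\infty$. One wants, for $c$ large, an embedded least area minimal surface $\Sigma_c$ isotopic to $S_c:=S\cap\boN_c$ relative to its boundary $\bigsqcup_j k_j\gamma_j\times\{c\}\subset\partial\boN_c$; incompressibility is precisely what the Meeks--Simon--Yau and Freedman--Hass--Scott machinery requires to ensure the minimizer is embedded, isotopic to $S_c$, and free of multiplicity, lost genus, or split-off spheres. The obstacle is that $\boN_c$ is mean-concave along $\partial\boN_c$ (its horotori point their mean curvature vectors out of $\boN_c$), so that machinery does not apply verbatim; I would circumvent this by compactifying — e.g. Dehn filling each $C_j$ along the slope $\gamma_j$ by a solid torus carrying a mean-convex metric tuned so that the capped surface $\widehat S$ remains incompressible and non-separating in the resulting closed manifold, least area surfaces there meet the filling solid tori in near-meridian disks, and the metric is unchanged on $\boN_c$ — then apply the closed case of the theory, remove the caps, and let them recede toward the cusps. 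The uniform bound $\mathrm{Area}(\Sigma_c)\le\mathrm{Area}(S)$ of Step~2, together with stability of $\Sigma_c$ and curvature estimates for stable minimal surfaces, yields smooth subconvergence on compact subsets of $\boN$, with multiplicity one by incompressibility, to a complete properly embedded minimal surface $\Sigma$ of finite area; superharmonicity of $t$ and finiteness of the area force $\Sigma$ to have finite topology with ends the vertical annuli over the $\gamma_j$. Hence $\Sigma$ is isotopic to $S$, and since Step~2 gives $\mathrm{Area}(\Sigma)=\lim_c\mathrm{Area}(\Sigma_c)\le\mathrm{Area}(S')$ for every competitor $S'$, it is least area in its isotopy class.

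I expect this last step to be the main obstacle. The end analysis and the area-trapping estimate are essentially dictated by the strict superharmonicity of $t$ on minimal surfaces, but the interior minimization must be pushed through despite the cusps being mean-concave from the inside, and one must control the limit $c\to\infty$ so that the $\Sigma_c$ neither drift into the cusps, nor lose topology, nor acquire multiplicity, nor leave their isotopy class. Each of these points leans on the same two inputs — incompressibility of $S$ and superharmonicity of $t$ — and organizing their interaction (choosing the compactifying caps, or suitable barriers, correctly, and passing cleanly to the limit) is the technical heart of the argument; the rest is an application of the known compact minimal surface theory.
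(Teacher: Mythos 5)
Your overall architecture (fix the ends to be vertical geodesic annuli, truncate, minimize in the compact piece, let the truncation go to infinity) matches the paper's, and your computation that $t$ is strictly superharmonic on minimal surfaces is correct. But the two steps you yourself flag as the technical heart are exactly where the argument has genuine gaps, and the paper resolves them by different means. First, the compactification: you propose Dehn filling each cusp along the slope $\gamma_j$ with a mean-convex solid torus and asking that the capped surface $\widehat S$ remain incompressible and non-separating in the filled manifold. That is not something you can ``tune'' for free: capping a punctured surface after Dehn filling along its boundary slope can and does destroy incompressibility (this is a topological fact about the filled manifold, independent of the metric you put on the solid torus), and you would additionally have to prove that the closed least area surface meets the filling tori in near-meridian disks before you can ``remove the caps.'' The paper avoids all of this: it keeps the truncated manifold $\boN(4n)$ as a manifold with boundary, deforms the warping function $\Psi$ far out in the cusp so that the truncating torus $T(4n)$ becomes \emph{minimal}, and then applies Hass--Scott (Theorem~6.12 of \cite{HaSc}) to minimize rel the fixed boundary $\partial S(4n)$. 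No capping, no closed-surface theory, no change of topology.

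Second, preventing the minimizers from drifting into the cusps and identifying the isotopy class of the limit. Superharmonicity of $t$ gives only a \emph{minimum} principle for $t$ on minimal surfaces; it says nothing about how far up a compact embedded minimal surface with boundary low in the cusp can reach, which is the actual danger. The paper's substitute is its Maximum Principle II, whose proof is not a pointwise curvature computation but uses embeddedness together with the $\Z^2$-periodicity of the lifted surface (after shrinking $\Lambda(C)$), plus a reduction to the stable case. Your area-trapping estimate is meant to do this job, but its key step --- replacing $S'\cap\{t\ge c\}$ for an \emph{arbitrary} competitor $S'$ by vertical annuli of area $O(e^{-c})$ rel $S'\cap\{t\le c\}$ --- presupposes that $S'$ already meets $T(c)$ in $k_j$ essential curves bounding annular tails, i.e.\ the structure you are trying to establish. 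Likewise, the conclusion ``hence $\Sigma$ is isotopic to $S$'' needs the quantitative transversality of the $\Sigma_c$ to the horotori (the paper's Proposition~\ref{prop:transvers}, which uses the curvature bound for stable surfaces and again the periodicity of the lift) together with the topological claims showing that $\Sigma_c\cap\{n\le z\le 4n\}$ consists only of essential annuli joining $T(n)$ to $T(4n)$; and the multiplicity-two (double graph) scenario in the limit is excluded by the non-separating hypothesis, not by incompressibility. Without these ingredients the limit could a priori lose topology into the cusps or converge with multiplicity.
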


The surfaces produced by the above theorem have bounded curvature. Actually the
techniques we develop enable us to prove:
\begin{thC}
Let $\Sigma$ be a properly embedded minimal surface in $\boN$ of bounded
curvature. Then $\Sigma$ has finite topology.
\end{thC}
Since stable minimal surfaces have bounded curvature we conclude:
\begin{cor}
A properly embedded stable minimal surface in $\boN$ has finite topology.
\end{cor}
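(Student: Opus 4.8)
The plan is to establish that $\Sigma$ has uniformly bounded second fundamental form $A_\Sigma$ and then to invoke Theorem C. The basic tool is Schoen's curvature estimate for stable minimal surfaces: if a Riemannian $3$-manifold $M$ has sectional curvature bounded by $\Lambda$ and injectivity radius at least $i_0$ on a ball, and $T\subset M$ is a stable minimal surface meeting that ball with $d_T(x,\partial T)\ge\rho$, then $|A_T(x)|\le C(\Lambda,i_0)/\min(\rho,1)$. In our situation $\Sigma$ is complete (being properly embedded and boundaryless) and $\boN$ has constant curvature $-1$, so the only obstruction to applying this estimate uniformly is the degeneration of $\mathrm{inj}_{\boN}$ in the cusps. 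I would therefore use the thick--thin decomposition $\boN=K\cup C_1\cup\cdots\cup C_k$, enlarge $K$ to a compact $K'$ that swallows a unit-size collar of each cusp, and observe that a fixed neighbourhood of $K'$ has injectivity radius bounded below by some $\eps_0>0$; Schoen's estimate applied at every point of $\Sigma\cap K'$ with the fixed radius $\rho_0=\eps_0/10$ then gives $|A_\Sigma|\le C_1$ on $\Sigma\cap K'$. (If $\Sigma$ is one-sided one argues on the orientation double cover, or equivalently uses that small intrinsic disks of $\Sigma$ are stable two-sided minimal disks, so nothing is lost.)

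The heart of the matter is the behaviour deep in a cusp $C_j$. Let $\pi\colon B\to C_j$ be the universal covering by a horoball $B\subset\H^3$, let $t(\cdot)$ be the hyperbolic distance in $\H^3$ to the bounding horosphere $\partial B$, and set $\widehat\Sigma=\pi^{-1}(\Sigma\cap C_j)\subset B$, a properly embedded minimal surface in $B\subset\H^3$ with $\partial\widehat\Sigma\subset\partial B$. Stability of $\Sigma$ produces, by the classical exhaustion-by-first-eigenfunctions argument, a positive Jacobi field on $\Sigma\cap C_j$; its pullback is a positive Jacobi field on $\widehat\Sigma$, so by Fischer--Colbrie--Schoen $\widehat\Sigma$ is stable. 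Now Schoen's estimate in $\H^3$ (where $\Lambda=1$, $i_0=\infty$), together with $d_{\widehat\Sigma}(\hat p,\partial\widehat\Sigma)\ge d_{\H^3}(\hat p,\partial B)=t(\hat p)$, yields
\[
|A_\Sigma(p)|=|A_{\widehat\Sigma}(\hat p)|\le\frac{C}{\min\bigl(t(\hat p),1\bigr)}\qquad\text{for }p\in\Sigma\cap C_j,\ \pi(\hat p)=p,
\]
with $C$ absolute. On the deep part $\{t\ge1\}$ of $C_j$ this reads $|A_\Sigma|\le C$ (and in fact $|A_\Sigma|$ decays like $1/t$), while the remaining collar $\{t\le1\}$ was already absorbed into $K'$. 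Hence $\sup_\Sigma|A_\Sigma|<\infty$, so $\Sigma$ has bounded curvature, and Theorem C gives that $\Sigma$ has finite topology.

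The step I expect to be the real obstacle is exactly the cusp: Schoen's estimate is useless there as stated because the injectivity radius tends to $0$, and one needs the observation that after lifting to the horoball the ambient geometry becomes that of $\H^3$ and the relevant distance-to-boundary is the depth in the cusp, which is bounded below on the region that matters. The one genuinely delicate auxiliary point is that stability must survive the passage to the horoball cover; this is handled cleanly by producing and pulling back a positive Jacobi field, and the one-sided case costs only the cosmetic remark about double covers made above.
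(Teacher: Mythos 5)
Your proof is correct, but it reaches the key curvature bound by a genuinely different route than the paper. The paper's proof of this corollary is essentially one line: it invokes the curvature estimate of Rosenberg--Souam--Toubiana (cited as \cite{RoSoTo} in Section 4), which bounds $|A_\Sigma|$ for a stable minimal surface in terms of the ambient sectional curvature bound and the intrinsic distance to $\partial\Sigma$ \emph{only} --- explicitly with no dependence on the injectivity radius --- so the collapse of $\mathrm{inj}_{\boN}$ in the cusps is simply not an issue, and Theorem C applies at once. You instead use the classical Schoen-type estimate (which does require an injectivity radius lower bound) and circumvent the cusp degeneration by lifting $\Sigma\cap C_j$ to the horoball cover in $\H^3$, transferring stability via a positive Jacobi field in the style of Fischer--Colbrie--Schoen, and reading off the bound from the uniform geometry of $\H^3$; each step of this (the exhaustion argument producing the positive Jacobi field on a surface with boundary, its pullback under the local isometry, the comparison $d_{\widehat\Sigma}(\hat p,\partial\widehat\Sigma)\ge t(\hat p)$) is sound. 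What your route buys is self-containedness in the hyperbolic setting: you need only the classical estimate plus the standard stability-lifts-to-covers argument, exploiting that a cusp is covered by a horoball in a space of bounded geometry. What the paper's route buys is brevity and generality: the injectivity-radius-free estimate works for the modified metrics $g_\Psi$ on the cusps, which the paper needs repeatedly elsewhere (e.g.\ in the Maximum Principles of Section 4), whereas your horoball-lifting trick is tied to the exact hyperbolic metric. One cosmetic caveat on your write-up: since $\boN$ may be non-orientable (the paper's Gieseking example), your parenthetical about passing to double covers in the one-sided case is not merely decorative and should be kept.
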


Finite topology is particularly interesting here due to the Finite Total
curvature theorem below that describes the geometry of the ends of a properly immersed
minimal surface in $\boN$ of finite topology
\begin{thm}[Collin, Hauswirth, Rosenberg \cite{CoHaRo}] \label{th:ftc}
A properly immersed minimal surface $\Sigma$ in $\boN$ of finite topology has
finite total curvature
$$
\int_\Sigma K_\Sigma=2\pi\chi(\Sigma)
$$
Moreover, each end $A$ of $\Sigma$ is asymptotic to a totally geodesic $2$-cusp
end in an end $C$ of $\boN$.
\end{thm}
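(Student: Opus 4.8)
The plan is to analyze $\Sigma$ one end at a time. Since $\Sigma$ has finite topology and $\boN$ has finitely many cusp ends, there is a compact set $K\subset\boN$ so that each component $A$ of $\Sigma\setminus K$ is an annular end contained in a single cusp end $C$ of $\boN$. Model $C$ as $\{(x,y,t)\in\H^3:t\ge 1\}/\Lambda$ in the upper half-space model, where $\Lambda\cong\Z^2$ acts by horizontal translations, so that $t$ is (up to an additive constant) an exhaustion of the cusp. Because the immersion is proper and $\{1\le t\le R\}/\Lambda$ is compact, $A\cap\{t\le R\}$ is compact for every $R$, hence $t\to+\infty$ properly along $A$. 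I would then proceed in three steps: (i) a maximum–principle estimate on $A$ showing each end has finite area and homotopy class in $C$ a primitive $v_0\in\Z^2$; (ii) decay of the second fundamental form of $A$ as $t\to\infty$; (iii) identification of $A$, via elliptic theory, with a graph over the totally geodesic $2$-cusp attached to $v_0$, followed by a Gauss–Bonnet computation.

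For step (i): by minimality $\Delta_A f=\tr_A\bar\nabla^2 f$, and a Hessian computation in the hyperbolic metric gives $\bar\nabla^2\log t=-g+t^{-2}\,\mathrm{d}t\otimes\mathrm{d}t$, whence
\[ -2\ \le\ \Delta_A\log t\ =\ -2+t^{-2}\,|\nabla^A t|_g^2\ \le\ -1 . \]
Integrating over $A\cap\{t\le R\}$ and estimating the boundary flux by the lengths of $\partial A$ and of the waist $A\cap\{t=R\}$ — the latter lying in the flat torus $\{t=R\}/\Lambda$, of diameter $O(1/R)$, in a fixed homology class, hence of length $\to 0$ — yields $\int_A(2-t^{-2}|\nabla^A t|_g^2)\,\mathrm{d}A<\infty$; as the integrand is $\ge 1$, $\operatorname{Area}(A)<\infty$ and the tangent planes of $A$ become vertical in an averaged sense. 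The waist-homology analysis (plus, if needed, a further maximum principle excluding a null-homotopic end that penetrates the cusp) shows $\pi_1(A)\to\pi_1(C)$ is injective with primitive image $v_0$; let $\boD\subset C$ be the totally geodesic $2$-cusp $(\{t\ge 1\}\cap P)/\langle v_0\rangle$, where $P$ is the vertical geodesic plane over the line through the origin of direction $v_0$.

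Step (ii) is where I expect the real difficulty. I would argue by contradiction: pick $p_n\in A$ with $t(p_n)\to\infty$ and $|h|(p_n)$ essentially realizing the supremum of the norm $|h|$ of the second fundamental form on $A\cap\{t\le t(p_n)\}$, and suppose this supremum does not tend to $0$. Apply to $\boN$ the isometries of $\H^3$ given by the dilations $(x,y,t)\mapsto t(p_n)^{-1}(x,y,t)$ composed with horizontal translations, normalizing $p_n$ to a fixed point at height $1$; then the cusp opens up (the lattice $t(p_n)^{-1}\Lambda\to 0$ and the horoball exhausts $\H^3$) and $\partial A$ recedes to $\partial_\infty\H^3$, so a subsequence of the renormalized surfaces converges to a complete minimal surface $A_\infty\subset\H^3$ with $|h|\not\equiv 0$. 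The hard part is to show the estimates of step (i) survive in the limit and force $A_\infty$ to be totally geodesic — for instance that the shrinking-waist/vanishing-flux information degenerates to the statement that $A_\infty$ has a bounded coordinate function, or lies in the convex hull of a single geodesic at infinity — contradicting $|h|\not\equiv 0$. Making this limiting bookkeeping rigorous, and controlling the possibly non-embedded immersed geometry along the way, is the crux of the whole proof.

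Granting step (ii), standard elliptic theory finishes. In coordinates $s=-\log t\to-\infty$, $x\in\R/\Z$ along $v_0$, bounded curvature together with properness let one write $A$, for $s$ small enough, as a graph $y=u(s,x)$ over $\boD$; the minimal surface equation for $u$ is a small perturbation of its linearization, the Jacobi operator $\Delta_{\boD}-2$ on the hyperbolic cusp, whose indicial roots in the $x$–independent Fourier mode are $1$ and $-2$ and whose higher modes are super-exponentially damped. Hence the only bounded solutions decay, so $u$ and its derivatives tend to $0$ as $s\to-\infty$, i.e. each end of $\Sigma$ is asymptotic to a totally geodesic $2$-cusp end in a cusp of $\boN$; in particular $\int_{A\cap\{t=R\}}k_g\to 0$ as $R\to\infty$. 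Finally, by the Gauss equation $K_\Sigma=-1-\tfrac12|h|^2\le -1$, and applying Gauss–Bonnet to $\Sigma_R$, obtained from $\Sigma$ by deleting $\{t>R\}$ in each end,
\[ \int_{\Sigma_R}K_\Sigma\ +\ \sum_{\text{ends}}\int_{A\cap\{t=R\}}k_g\ =\ 2\pi\,\chi(\Sigma_R)\ =\ 2\pi\,\chi(\Sigma) , \]
the first integral is monotone in $R$ and the boundary sum tends to $0$, giving $\int_\Sigma K_\Sigma=2\pi\,\chi(\Sigma)$; this is finite, so also $\int_\Sigma|h|^2<\infty$, and the theorem follows.
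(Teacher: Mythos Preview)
This theorem is not proved in the present paper: it is quoted from \cite{CoHaRo} and used as a black box, notably in the proof of Theorem~D and in the area discussion of Section~\ref{sec:examples}. There is therefore no ``paper's own proof'' to compare your attempt against.

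On the sketch itself: the overall strategy (finite area via a superharmonic height function, curvature decay by a blow-up, then graphical asymptotics and Gauss--Bonnet) is the natural one, and your Hessian computation $\bar\nabla^2\log t=-g+t^{-2}\,dt\otimes dt$ with $-2\le\Delta_A\log t\le-1$ is correct. One small point: finite area comes more cheaply than you indicate, since on the outer slice $\{t=R\}$ the outward flux of $\log t$ is nonnegative, so integrating $\Delta_A\log t\le-1$ already gives $\mathrm{Area}(A\cap\{t\le R\})\le\mathrm{length}(\partial A)$ without any control on the waist; your assertion that the waist has length $O(1/R)$ is not justified at that stage (knowing the homology class bounds only the shortest representative, not the actual level curve). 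You are right that step~(ii) is the crux, and rescaling by hyperbolic dilations is the correct move; but turning the limit $A_\infty$ into a contradiction --- especially in the merely immersed case the theorem allows --- is exactly where the substantive work of \cite{CoHaRo} lies, and your sketch does not supply it. For the actual argument you should consult that reference.
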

We will make precise these notions.

The simplest example of a surface $\Sigma$ with finite topology appearing in the
above Theorem is a $3$-punctured sphere. Actually, minimal $3$-punctured spheres
are totally
geodesic.
\begin{thD}
A proper minimal immersion of a $3$-punctured sphere in $\boN$ is totally
geodesic.
\end{thD}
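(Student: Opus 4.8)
The plan is to use the Finite Total Curvature theorem (Theorem~\ref{th:ftc}) to control both the conformal type of $\Sigma$ and the representation it induces on $\pi_1$, and then to compare $\Sigma$ with the totally geodesic $3$-punctured sphere by a maximum-principle argument. Lift the immersion to a map $f\colon\widetilde\Sigma\to\H^3$ equivariant for a representation $\rho\colon\pi_1\Sigma\to\Isom(\H^3)=\mathrm{PSL}_2(\C)$, and let $c_1,c_2,c_3$ be the loops around the three punctures, so $c_1c_2c_3=1$ and $\pi_1\Sigma=\langle c_1,c_2\rangle$. By Theorem~\ref{th:ftc} the $i$-th end of $\Sigma$ is asymptotic to a totally geodesic $2$-cusp in a cusp $C_i$ of $\boN$, so $c_i$ is freely homotopic into $C_i$ and $\rho(c_i)$ is a nontrivial parabolic. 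Setting $A=\rho(c_1)$, $B=\rho(c_2)$, any $\mathrm{SL}_2(\C)$-lift satisfies the Fricke identity $\tr[A,B]=(\tr A)^2+(\tr B)^2+(\tr AB)^2-\tr A\,\tr B\,\tr AB-2$, which with $(\tr A)^2=(\tr B)^2=(\tr AB)^2=4$ forces $\tr[A,B]\in\{2,18\}$. If $\tr[A,B]=2$, $\rho$ is reducible. If $\tr[A,B]=18$, $\rho$ is irreducible, and since an irreducible $\mathrm{SL}_2(\C)$-representation of a rank-two free group is determined up to conjugacy by the triple $(\tr A,\tr B,\tr AB)$, $\rho$ is conjugate to the Fuchsian representation uniformizing the hyperbolic $3$-punctured sphere; in particular $\rho(\pi_1\Sigma)$ leaves invariant a totally geodesic copy $P\cong\H^2\subset\H^3$.

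Next I rule out the reducible case. If $\rho$ is reducible, then being discrete it is parabolic, fixing a point $\xi\in\partial_\infty\H^3$ which is the fixed point of a cusp of $\boN$, and $f$ descends to a proper minimal immersion of $\Sigma$ into the cover $\H^3/\Gamma_\xi$, isometric to $\T^2\times\R$ with a metric of the form $e^{-2s}g_{\T^2}+\dd s^2$ in which the three ends of $\Sigma$ tend to $s=+\infty$. A direct computation gives $\mathrm{Hess}\,s(\partial_j,\partial_k)=-e^{-2s}(g_{\T^2})_{jk}$, so $\mathrm{Hess}\,s$ is negative definite along the level tori $\{s=\text{const}\}$; hence every critical point of $s|_\Sigma$ is a strict local maximum, and $s|_\Sigma$ has no interior local minimum. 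But $s|_\Sigma$ is bounded below (the surface is compact away from its three ends, on which $s\to+\infty$), so it attains its infimum at an interior point of $\Sigma$ — a contradiction. Thus $\rho$ is irreducible and we are in the Fuchsian case.

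Now put $\phi=\cosh d_{\H^3}(f(\cdot),P)$ on $\widetilde\Sigma$. Since $\rho(\pi_1\Sigma)$ preserves $P$, $\phi$ is $\rho$-invariant and descends to $\bar\phi\ge 1$ on $\Sigma$. The function $x\mapsto\cosh d_{\H^3}(x,P)$ is convex on $\H^3$: in the normal direction $(\cosh d)''=\cosh d>0$, and tangent to an equidistant surface of $P$ at distance $t$ one gets $\mathrm{Hess}(\cosh d)=\sinh t\tanh t\,g\succeq0$. As $f$ is a conformal minimal immersion, composing a convex function with $f$ yields a subharmonic function, so $\bar\phi$ is subharmonic on $\Sigma$ (a conformally invariant condition). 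By Theorem~\ref{th:ftc}, $\Sigma$ has finite total curvature, so by Huber's theorem it is conformally $\widehat\C$ minus three points, a parabolic Riemann surface. Finally, near the $i$-th end $f(\widetilde\Sigma)$ is asymptotic to a totally geodesic $2$-cusp $T_i$ in $C_i$, while $P$ meets $C_i$ in a totally geodesic $2$-cusp $T_i'$ in the same parabolic direction $[\rho(c_i)]$; two such parallel $2$-cusps become exponentially close deep inside the cusp, so $\bar\phi\to 1$ at each end and $\bar\phi$ is bounded. A bounded subharmonic function on a parabolic Riemann surface is constant, so $\bar\phi\equiv 1$, i.e.\ $f(\widetilde\Sigma)\subset P$ and $\Sigma$ is totally geodesic. (Had one only established that $\bar\phi$ is bounded, one would get $\bar\phi\equiv c\ge1$; if $c>1$ then $f(\widetilde\Sigma)$ would lie in an equidistant surface of $P$, which has nonzero mean curvature, contradicting minimality, so again $c=1$.)

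The main obstacle is the coordination between the two ends of the argument: one must match the ends of $\Sigma$ — known only asymptotically, through Theorem~\ref{th:ftc} — with those of the totally geodesic model $P$ closely enough (same cusps $C_i$ of $\boN$, same parabolic directions) that $d_{\H^3}(f(\cdot),P)$ stays bounded, and this rests on the algebraic rigidity of parabolic representations of the $3$-punctured sphere group obtained in the first step. Excluding the reducible case via the maximum principle in the cusp, and the computations verifying the convexity of $\cosh d_{\H^3}(\cdot,P)$ and the Huber identification of the conformal type, are routine.
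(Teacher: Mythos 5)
Your proof is correct, but the closing argument is genuinely different from the paper's. On the algebraic side you and the paper establish the same dichotomy: the paper normalizes $T_\alpha,T_\beta$ in $SL_2(\C)$, uses that $T_\alpha T_\beta$ is parabolic to get $c=0$ or $c=-4/w$, and in the second case exhibits an explicit invariant circle; your Fricke-identity computation $\tr[A,B]\in\{2,18\}$ plus trace-triple rigidity of irreducible rank-two representations is a coordinate-free repackaging of the same fact, and your exclusion of the reducible case (lift to the $\Z^2$-cusp cover, note that all three ends then run up the same cusp, and apply the maximum principle to the strictly superharmonic height $s$) is essentially the paper's ``$z$ attains a minimum on the periodic lift'' argument. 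Where you diverge is the endgame: the paper works extrinsically in $\H^3$, first proving that the limit set of the lifted surface equals $\partial_\infty P$ and then sweeping $\H^3$ by totally geodesic planes from the two poles of a foliation to trap $\widetilde\Sigma$ in $P$ from both sides; you instead work intrinsically on $\Sigma$, showing $\cosh d(\cdot,P)$ is convex, hence subharmonic on the minimal surface, bounded because each end is asymptotic to a $\rho(c_i)$-invariant vertical plane parallel to $P\cap H_i$, and constant because Huber's theorem plus the Finite Total Curvature Theorem make $\Sigma$ conformally parabolic. The paper's route has the advantage of simultaneously identifying the limit set of $\widetilde\Sigma$ and giving $\pi_1$-injectivity as a byproduct; yours avoids any discussion of the limit set of the lifted surface and replaces the plane sweepout by a standard Liouville-type statement, at the price of invoking Huber and of the careful end-matching you correctly flag as the delicate point. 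Both uses of the asymptotic geometry rest on the same input, Theorem~\ref{th:ftc}, so the logical dependencies are identical.
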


The paper is organized as follows. In Section \ref{sec:discussion}, we make some
general remarks on the geometry of cusp manifolds stating some results of
Jorgensen, Thurston and Adams. In Section \ref{sec:sphere}, we consider
$3$-punctured spheres in hyperbolic cusp manifolds and prove Theorem D. In
Section \ref{sec:maxtrans}, we study minimal surfaces entering the ends of
hyperbolic cusp manifolds $\boN$. We prove two maximum principles which govern
the geometry of minimal surfaces in the ends of $\boN$. We also establish a
transversality result which is used to study annular ends of minimal surfaces. Section
\ref{sec:compact} proves Theorems A and B, the existence of compact embedded minimal
surfaces in hyperbolic cusp manifolds. Section \ref{sec:noncompact} proves the minimization result in the non compact case. Then in Section \ref{sec:examples}, we present several examples to illustrate these theorems.

\section{Some discussion of the manifolds $\boN$}
\label{sec:discussion}

In this section we recall some facts about the geometry of a non compact
hyperbolic $3$-manifold $\boN$ of finite volume.

Such $\boN$ are the union of a compact submanifold bounded by mean concave mean
curvature one tori, and a finite number of ends, each end isometric to a quotient of
a horoball of $\H^3$ by a $\Z^2$ group of parabolic isometries leaving the
horoball invariant. The horospheres in this horoball quotient to mean curvature
one tori in $\boN$.

An end of $\boN$ can be parametrized by $M=\{(x,y,z)\in\R^3\,|\, z\ge 1/2\}$
modulo a group $G=G(v_1,v_2)$, generated by two translations by linearly
independent horizontal vectors $v_1,v_2\in \R^2\times\{0\}$.

The end $C=M/G$ is endowed with the quotient of the hyperbolic metric of $M$ :
$$
g_\H=\frac1{z^2}(dx^2+dy^2+dz^2)=\frac1{z^2}dX^2.
$$

The horospheres $\{z=c\}$ quotient to tori $T(c)$, of mean curvature
one with respect to the unit normal vector $z\partial z$. The vertical curves
$\{(x,y)=\textrm{constant}\}$ are geodesics orthogonal to the tori $T(c)$, with
arc length given by $s=\ln z$. The induced metric on $T(c) $ is flat and lengths
on $T(c)$ decrease exponentially as $s\rightarrow \infty$.

We will denote by $T(a,b)$ the subset $\{a\le z\le b\}$ of $C$.

The Euclidean planes $\{ax+by=c\}$ are totally geodesic surfaces in $C$. When they
are properly embedded in $C$, they are the totally geodesic $2$-cusp ends in $C$
that appears in the Finite Total Curvature Theorem above.

Define $\Lambda(C)=\max\{|v_1|,|v_2|\}$ with $|v|$ the Euclidean norm. We notice
that we have made a choice of generators $v_1,v_2$ of the group $G$, so
the value $\Lambda(C)$ depends on this choice (we can minimize the value of
$\Lambda$ among all choices but it is not important in the following).

\begin{remarq}\label{rem:isom}
The above notations are well adapted to study the geometry close to $z=1$. For
$z_0$ larger than $1$, let $H$ be the map $(x,y,z)\mapsto (z_0x,z_0y,z_0z)$
which sends $M$ to $\R^2\times[z_0/2,+\infty)$. This map gives us then a chart of
$C'=\{z\ge z_0/2\}\subset C$ parametrized by $\{z'\ge 1/2\}$ with
$\Lambda(C')=\Lambda(C)/z_0$. So, considering a part of the end that is
sufficiently far away, we can always assume that $\Lambda(C)$ is small.
\end{remarq}

We mention two theorems concerning the manifolds $\boN$.
\begin{thm}[Jorgensen]
Given $V>0$, there exist a finite number of such manifolds $\boN$ whose volume
is equal to $V$.
\end{thm}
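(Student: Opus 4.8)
The plan is to argue by contradiction, combining the Margulis lemma in dimension three (with an explicit constant provided by Jorgensen's inequality), geometric convergence of Kleinian groups in the Chabauty topology, Mostow--Prasad rigidity, and Thurston's hyperbolic Dehn surgery theorem. So suppose there were infinitely many pairwise non-isometric complete finite volume hyperbolic $3$-manifolds $\boN_i=\H^3/\Gamma_i$, with each $\Gamma_i<\mathrm{PSL}_2(\C)$ a torsion free lattice and $\mathrm{vol}(\boN_i)=V$. Fix a Margulis constant $\eps_0>0$ valid in dimension three. For each $i$ the $\eps_0$-thin part of $\boN_i$ is a disjoint union of Margulis tubes around short closed geodesics and of cusp neighbourhoods, and its complement, the $\eps_0$-thick part $\boN_i^{+}$, is a compact submanifold with boundary. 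A maximal family of disjoint embedded balls of radius $\eps_0/2$ centred in $\boN_i^{+}$ has at most $V/v_0$ members, $v_0$ being the hyperbolic volume of a ball of radius $\eps_0/2$; hence $\mathrm{diam}(\boN_i^{+})$ and the number of connected components of $\boN_i^{+}$ are bounded independently of $i$. Pick base points $x_i\in\boN_i^{+}$, so that $\mathrm{inj}_{\boN_i}(x_i)\ge\eps_0/2$.

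The next step is to extract a geometric limit. Conjugate $\Gamma_i$ so that $x_i$ lifts to a fixed point $o\in\H^3$; then no non-trivial element of $\Gamma_i$ moves $o$ by less than $\eps_0$, and by the volume-counting argument above (which is where Jorgensen's inequality enters quantitatively) the number of $\gamma\in\Gamma_i$ with $d(o,\gamma o)\le R$ is, for each $R$, bounded in terms of $R$ and $\eps_0$ alone. Hence $(\Gamma_i)$ is precompact in the Chabauty topology on closed subgroups of $\mathrm{PSL}_2(\C)$, and after passing to a subsequence $\Gamma_i\to\Gamma_\infty$ with $\Gamma_\infty$ discrete and torsion free, so that $\boN_\infty:=\H^3/\Gamma_\infty$ is a complete hyperbolic $3$-manifold; equivalently $(\boN_i,x_i)\to(\boN_\infty,x_\infty)$ in the pointed Gromov--Hausdorff sense, the balls $B_{\boN_i}(x_i,R)$ converging smoothly to $B_{\boN_\infty}(x_\infty,R)$. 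Since the thick parts $\boN_i^{+}$ all contain $x_i$ and have uniformly bounded diameter, the thick part of $\boN_\infty$ lies in a bounded ball about $x_\infty$; together with the structure of its thin part (finitely many cusps, by the universal lower bound on cusp volume, and finitely many Margulis tubes, each of bounded volume) this forces $\mathrm{vol}(\boN_\infty)<\infty$, and lower semicontinuity of volume gives $\mathrm{vol}(\boN_\infty)\le V$.

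Now one invokes rigidity to close the loop. As $\boN_\infty$ has finite volume, Thurston's hyperbolic Dehn surgery theorem applies to the geometric convergence $\boN_i\to\boN_\infty$: for all large $i$ the manifold $\boN_i$ is obtained from $\boN_\infty$ by Dehn filling along a (possibly empty) collection of its cusps, and in particular $\boN_i$ carries a compact core diffeomorphic to one of $\boN_\infty$; moreover Thurston's volume estimate gives $\mathrm{vol}(\boN_i)\le\mathrm{vol}(\boN_\infty)$, with equality precisely when no filling takes place. Combined with $\mathrm{vol}(\boN_\infty)\le V=\mathrm{vol}(\boN_i)$, this yields $\mathrm{vol}(\boN_\infty)=V$, hence no filling, hence $\boN_i$ diffeomorphic to $\boN_\infty$ for all large $i$. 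By Mostow--Prasad rigidity two diffeomorphic finite volume hyperbolic $3$-manifolds are isometric, so $\boN_i$ is isometric to $\boN_\infty$ for all large $i$ --- contradicting the hypothesis that the $\boN_i$ are pairwise non-isometric.

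I expect the \emph{main obstacle} to be the passage from the metric statement of geometric convergence to the topological conclusion that $\boN_i$ is, for large $i$, a Dehn filling of the limit $\boN_\infty$, together with the strict decrease of volume under non-trivial filling: this is exactly Thurston's hyperbolic Dehn surgery theorem, whose proof --- deforming the complete structure of $\boN_\infty$ and controlling the developing map in the cusps --- is the deep ingredient. The remaining tools (the Margulis lemma via Jorgensen's inequality, Chabauty precompactness, Mostow--Prasad rigidity) are standard. The volume bound could instead be repackaged through Gromov's equality $\mathrm{vol}(\boN)=v_3\,\|\boN\|$ between volume and simplicial volume, turning it into a bound on the combinatorial complexity of a triangulation of $\boN$, but the finiteness statement still rests on the same geometric compactness and rigidity.
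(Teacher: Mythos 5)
The paper does not prove this statement: Jorgensen's theorem is quoted in Section 2 as background, without proof, so there is no argument of the authors to compare yours against. Your proposal is the standard Jorgensen--Thurston argument (as in Benedetti--Petronio, Chapter E, or Thurston's notes), and as a sketch it is essentially correct: Margulis thick--thin decomposition plus a packing bound gives uniformly bounded geometry at well-chosen base points; Chabauty precompactness yields a geometric limit $\boN_\infty$ with $\mathrm{vol}(\boN_\infty)\le V$ by lower semicontinuity; the Dehn-filling description of geometric convergence together with strict volume decrease under nontrivial filling squeezes $\mathrm{vol}(\boN_\infty)=V$ and forces $\boN_i\cong\boN_\infty$ for large $i$; Mostow--Prasad rigidity upgrades this to an isometry and contradicts the assumption of infinitely many pairwise non-isometric examples.

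Two points deserve attention. First, you correctly identify the deep step, but be aware that it is really two separate deep facts: (i) that a geometric limit of finite-volume manifolds under these hypotheses has finite volume and that the approximating manifolds are, for large $i$, Dehn fillings of the limit (this is the ``Jorgensen'' half of the Jorgensen--Thurston theory, proved via analysis of the thick parts and the drilling/filling correspondence, not via the deformation theory), and (ii) the strict volume decrease under nontrivial filling (Thurston, via the Schl\"afli-type variation of volume, or via Gromov norm). Since the paper states Thurston's Dehn surgery theorem immediately after Jorgensen's, your argument may look circular at first sight, but it is not: both ingredients are established independently of the finiteness statement. Second, minor gaps to flag rather than fix: the torsion-freeness and discreteness of the Chabauty limit $\Gamma_\infty$ require Jorgensen's inequality (a limit of loxodromic or parabolic elements can a priori be elliptic), and the finiteness of the number of thin components of $\boN_\infty$ should be justified by the compactness of its thick part rather than asserted. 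Neither affects the correctness of the outline.
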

\begin{thm}[Thurston]
Any compact hyperbolic $3$-manifold $M^3$, $\partial M^3=\emptyset$, with
$Vol(M)<V$ is obtained from the finite number of $\boN$ given by
Jorgensens' theorem, by hyperbolic Dehn surgery on at least one of the cusp ends.
\end{thm}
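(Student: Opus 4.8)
The plan is to obtain the statement by combining four standard ingredients: Mostow--Prasad rigidity, the universal lower bound $v_0>0$ for the volume of a maximal cusp (Meyerhoff, Adams), Gromov's compactness theorem for finite-volume hyperbolic $3$-manifolds, and Thurston's hyperbolic Dehn surgery theorem. Rigidity makes a complete finite-volume hyperbolic structure on an orientable $3$-manifold unique up to isometry, so being ``hyperbolic'' is topological, $\mathrm{Vol}$ is a homeomorphism invariant, and one only has to control homeomorphism types and to say which manifold is a filling of which. The cusp bound gives that a finite-volume hyperbolic $3$-manifold of volume $\le V$ has at most $n(V):=\lfloor V/v_0\rfloor$ cusps, which will make an induction on the number of cusps terminate.

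The analytic engine is the following consequence of the Margulis lemma and the thick--thin decomposition: a sequence $(M_i)$ of complete orientable finite-volume hyperbolic $3$-manifolds with $\mathrm{Vol}(M_i)\le V$ has a subsequence converging geometrically --- in the pointed bi-Lipschitz topology, with basepoints in the $\eps$-thick part for a fixed $\eps$ below the Margulis constant --- to a complete finite-volume hyperbolic $3$-manifold $N$ with $\mathrm{Vol}(N)\le\liminf\mathrm{Vol}(M_i)$. The $\eps$-thick parts have injectivity radius $\ge\eps/2$ and, by the volume bound, uniformly bounded diameter, so Cheeger--Gromov compactness applies there; the $\eps$-thin parts are explicit unions of Margulis tubes around short geodesics and rank-two cusps, which are matched with those of the limit. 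Feeding this into Thurston's hyperbolic Dehn surgery theorem produces the dichotomy that drives everything: for $i$ large, \emph{either} $N$ is compact and $M_i$ is isometric to $N$ (a compact geometric limit is stable, then apply rigidity), \emph{or} $N$ has $c\ge 1$ cusps and $M_i$ is obtained from $N$ by Dehn filling a nonempty subset of its cusps, with $\mathrm{Vol}(M_i)<\mathrm{Vol}(N)$ and $\mathrm{Vol}(M_i)\to\mathrm{Vol}(N)$.

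From here the argument is combinatorial. First, Jorgensen's finiteness: were there infinitely many pairwise non-homeomorphic hyperbolic $3$-manifolds of a fixed volume $V$, a subsequence would converge geometrically to a finite-volume hyperbolic $N$ with $\mathrm{Vol}(N)\le V$; a cusped $N$ is impossible, for then $\mathrm{Vol}(M_i)<\mathrm{Vol}(N)\le V$ for large $i$, contradicting $\mathrm{Vol}(M_i)=V$; so $N$ is compact and the $M_i$ stabilize, a contradiction. The same limiting argument, run downward on the cusp number starting at $n(V)$, shows that the cusped hyperbolic $3$-manifolds of volume $\le V$ with the maximal cusp count are finite in number (an accumulation point of infinitely many would need still more cusps), and that every cusped hyperbolic $3$-manifold of volume $\le V$ with fewer cusps is a Dehn filling of some cusps of one with strictly more cusps; thus the whole family of such manifolds is recovered from finitely many ``top'' ones by choosing surgery slopes. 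Finally, let $M$ be closed hyperbolic with $\mathrm{Vol}(M)<V$. If $M$ is $\eps$-thick, bounded geometry makes it one of finitely many; otherwise $M$ has a short geodesic $\gamma$, and drilling $\gamma$ yields a complete finite-volume hyperbolic structure on $M\setminus\gamma$ with volume only slightly larger than $\mathrm{Vol}(M)$ (Neumann--Zagier, Hodgson--Kerckhoff), $M$ being recovered by filling the new cusp; iterating, $M$ is exhibited as a Dehn filling of one of the finitely many cusped manifolds that Jorgensen's theorem provides.

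The real obstacle is Thurston's hyperbolic Dehn surgery theorem with its volume refinement; everything else is standard. Its proof rests on the deformation theory of hyperbolic structures on a cusped $N$: that near the complete structure the character variety into $\mathrm{PSL}_2(\C)$ is smooth of complex dimension equal to the number of cusps (Weil--Garland local rigidity together with incompressibility of the cusp tori), that the nearby structures are cut out by the generalized Dehn surgery equations, that all but finitely many surgery slopes are actually realized by genuine complete hyperbolic metrics --- which requires controlling the developing map and holonomy near the parabolic locus and carrying out an honest geometric gluing of the thick part to the core solid tori --- and that the volume varies continuously and increases monotonically to $\mathrm{Vol}(N)$ as the slopes go to infinity (Gromov--Thurston straightening and the Schl\"afli volume-variation formula). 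Granting this theorem, the Margulis lemma, Cheeger--Gromov compactness, Mostow rigidity and the cusp-volume lower bound, the steps above assemble into the asserted statement.
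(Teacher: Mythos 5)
The paper offers no proof of this statement: it is quoted as classical background, with the reader sent to Benedetti--Petronio \cite{BePe}. Your outline is exactly the Jorgensen--Thurston argument carried out there --- geometric compactness for volume-bounded families via the Margulis thick--thin decomposition, the limit dichotomy furnished by the hyperbolic Dehn surgery theorem with its volume monotonicity, the cusp-volume lower bound to cap the number of cusps, a descending induction on the cusp count, and Mostow--Prasad rigidity to turn isometry statements into topological ones. As an architecture it is sound, and you correctly identify that all the weight sits on the Dehn surgery theorem, which you leave as an acknowledged black box.

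Two points to tighten. First, in the closed case you dispose of the $\eps$-thick manifolds by saying bounded geometry leaves only finitely many, but the statement requires \emph{every} closed $M$ with $\mathrm{Vol}(M)<V$ to arise by surgery on at least one cusp of some $\boN$; for those finitely many thick manifolds you still owe a cusped parent. The standard remedy is to drill a simple closed geodesic (every closed hyperbolic $3$-manifold contains one, and its complement carries a complete finite-volume hyperbolic structure of which $M$ is a filling), and adjoining these finitely many drilled manifolds to the list is harmless. Second, a wording slip: when the $M_i$ are closed and the geometric limit $N$ is cusped, the $M_i$ must fill \emph{all} cusps of $N$, not merely a nonempty subset; the ``nonempty proper subset'' case is what powers the induction among the cusped manifolds of volume at most $V$.
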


Concerning surface theory in $\boN$, we mention one theorem that inspired
Theorem D.
\begin{thm}[Adams \cite{Ada}]
Let $\Sigma$ be a properly embedded $3$-punctured sphere in $\boN$, $\Sigma$
incompressible. Then $\Sigma$ is isotopic to a totally geodesic $3$-punctured
sphere in $\boN$.
\end{thm}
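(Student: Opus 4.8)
The plan is to combine the algebraic rigidity of the thrice-punctured sphere group with a topological argument promoting the resulting totally geodesic immersion to an isotopy. Write $\boN=\H^3/\Gamma$ with $\Gamma\subset PSL_2(\C)$ discrete, let $\Gamma_0\subset\Gamma$ be the image of $\pi_1(\Sigma)$ (injective, since $\Sigma$ is incompressible, so $\Gamma_0$ is free of rank two and nonelementary), and present $\pi_1(\Sigma)=\langle a,b,c\mid abc=1\rangle$ with $a,b,c$ the three puncture loops. First I check that $a,b,c$ map to parabolics: choosing a compact core $K\subset\boN$ transverse to $\Sigma$, each end of $\Sigma$ is an annulus properly embedded in one cusp $C_i\cong T^2\times[0,\infty)$ whose core is essential in $\Sigma$, hence nontrivial in $\pi_1(C_i)=\Z^2$, hence parabolic in $\Gamma$; that these ends are genuinely annular uses only that $\Sigma$ is planar.

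Next, the rigidity step. Conjugating in $PSL_2(\C)$ and then by a diagonal element, I may assume the image of $a$ is $\begin{pmatrix}1&2\\0&1\end{pmatrix}$; since $\Gamma_0$ is nonelementary the image of $b$ does not fix $\infty$, and conjugating by a translation (which fixes the image of $a$) I may place its fixed point at $0$, so the image of $b$ is $\begin{pmatrix}1&0\\ \beta&1\end{pmatrix}$ with $\beta\neq 0$. Then the image of $ab$ has trace $2+2\beta$, and parabolicity of $c=(ab)^{-1}$ forces $\beta=-2$. Thus $\Gamma_0$ is conjugate to the standard thrice-punctured sphere group $\Gamma(2)\subset PSL_2(\R)$, which preserves a totally geodesic plane $P\subset\H^3$ and uniformizes the thrice-punctured sphere. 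So, after conjugation, $\Gamma_0=\Gamma(2)\subset\Gamma$ and $\Sigma^{*}:=P/\Gamma_0$ is a totally geodesic thrice-punctured sphere mapping to $\boN$ by a proper map $f$ realizing $\Gamma_0\hookrightarrow\Gamma$; in particular $f$ is properly homotopic to the inclusion $\Sigma\hookrightarrow\boN$.

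It remains to see that $f$ is an embedding and that this homotopy can be made an isotopy. For embeddedness I must show the $\Gamma$-translates of $P$ are pairwise equal or disjoint. If some $g\in\Gamma$ made $gP$ cross $P$, the ideal boundary circles $g(\partial P)$ and $\partial P$ would link in $S^2=\partial\H^3$. But the preimage in $\H^3$ of the embedded surface $\Sigma$ is a $\Gamma$-invariant family of pairwise disjoint properly embedded planes, and the component $D_0$ with stabilizer $\Gamma_0$ satisfies $\partial D_0=\Lambda(\Gamma_0)=\partial P$, the limit set of the incompressible surface subgroup $\Gamma_0$ being the boundary circle of its invariant lift, which here is the round circle $\partial P$. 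For $g\notin\Gamma_0$ the planes $gD_0$ and $D_0$ are disjoint, so $g(\partial P)$ and $\partial P$ are unlinked, a contradiction; and $g\in\Gamma_0=\mathrm{Stab}(P)$ fixes $P$. Hence $f$ embeds $\Sigma^{*}$. Finally $\Sigma$ and $\Sigma^{*}$ are properly embedded, two-sided, incompressible, properly homotopic surfaces in the irreducible (Haken) manifold $\boN$, so by Waldhausen's theorem in the cusped setting they are ambient isotopic; thus $\Sigma$ is isotopic to the totally geodesic surface $\Sigma^{*}$.

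I expect the embeddedness step to be the crux: it rests on the two standard but nontrivial inputs that the limit set of the incompressible surface group $\Gamma_0$ is exactly the ideal boundary of its invariant lift $D_0$, and that disjoint properly embedded planes in $\H^3$ have unlinked ideal boundaries. Given these, the trace computation pins $\Gamma_0$ down and a Waldhausen-type rigidity finishes the argument, while the cusp bookkeeping of the first step is routine precisely because $\Sigma$ is planar.
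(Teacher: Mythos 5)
A point of bookkeeping first: the paper does not prove this statement; it is quoted from Adams \cite{Ada} with no proof given. What you have written is essentially Adams' original argument, and the outline is sound. Its algebraic core --- the three puncture classes are parabolic, and after normalizing $a=\left(\begin{smallmatrix}1&2\\0&1\end{smallmatrix}\right)$, $b=\left(\begin{smallmatrix}1&0\\ \beta&1\end{smallmatrix}\right)$ the condition $\tr(ab)=2+2\beta=\pm2$ forces $\beta=-2$, so $\Gamma_0$ is conjugate to $\Gamma(2)$ --- is exactly the computation the paper performs inside Claim~\ref{cl:adams} of its proof of Theorem D (there phrased as $c=0$ or $c=-4/w$). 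The two arguments then diverge for a good reason: Theorem D concerns a \emph{minimal immersion}, so the paper can finish analytically by sweeping totally geodesic planes and applying the maximum principle, whereas your $\Sigma$ is only incompressible and embedded, so you must finish topologically; the Waldhausen step is the right tool and is what Adams uses.

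Two places where you assert more than you establish. First, injectivity of $P/\Gamma_0\to\boN$ needs not only that distinct $\Gamma$-translates of $P$ are disjoint but also that $\mathrm{Stab}_\Gamma(P)=\Gamma_0$; your linking argument only rules out transverse crossings, and an element $g\notin\Gamma_0$ with $g\,\partial P=\partial P$ would preserve the geodesic plane $P$ and kill injectivity. This is repairable: since $\Gamma$ is torsion-free, $\mathrm{Stab}_\Gamma(P)$ injects into $\mathrm{Isom}(\H^2)$ as a discrete torsion-free group containing the lattice $\Gamma(2)$ with some finite index $k$, so $P/\mathrm{Stab}_\Gamma(P)$ is a complete hyperbolic surface of area $2\pi/k$, and Gauss--Bonnet ($\mathrm{area}=-2\pi\chi\ge 2\pi$) forces $k=1$. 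Second, the identification $\overline{D_0}\cap\partial_\infty\H^3=\Lambda(\Gamma_0)$, which you rightly flag as an input, is not automatic for a properly embedded incompressible surface: it requires decomposing $\Sigma$ into a compact piece plus annular ends trapped in horoballs centered at parabolic fixed points of $\Gamma_0$ and tracking accumulation points --- precisely the argument carried out in the second half of the paper's Claim~\ref{cl:adams}, which you could cite verbatim. With those two points filled in (together with the routine verification that two disjoint properly embedded planes whose ideal boundary sets are Jordan curves have non-crossing ideal boundaries), your proof is complete and faithful to Adams' route.
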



\section{Minimal $3$-punctured spheres are totally geodesic}
\label{sec:sphere}


In this section we prove that, under some hypotheses, a minimal surface is
totally geodesic. We first have the following result.

\begin{thD}
A proper minimal immersion of a $3$-punctured sphere in $\boN$ is totally
geodesic. Moreover, it is $\pi_1$ injective.
\end{thD}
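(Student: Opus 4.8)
The plan is to combine the Finite Total Curvature Theorem (Theorem~\ref{th:ftc}), a maximum principle in the cusp ends of $\boN$, and the rigidity of thrice--punctured sphere subgroups of Kleinian groups (as in Adams \cite{Ada}). Write $\Sigma$ for the $3$--punctured sphere and $f\colon\Sigma\to\boN$ for the proper minimal immersion, with $\chi(\Sigma)=-1$; since $\Sigma$ is orientable, $f$ is two--sided and its second fundamental form $A$ is defined. As $\Sigma$ has finite topology, Theorem~\ref{th:ftc} applies: $\int_\Sigma K_\Sigma\,dA=-2\pi$, and each of the three ends of $\Sigma$ is asymptotic to a totally geodesic $2$--cusp end of $\boN$. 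The Gauss equation for a minimal surface in a $3$--manifold of curvature $-1$ gives $K_\Sigma=-1-\tfrac12|A|^2$, so
\begin{equation*}
\mathrm{Area}(\Sigma)+\tfrac12\int_\Sigma|A|^2\,dA=2\pi .
\end{equation*}
Thus $\mathrm{Area}(\Sigma)\le2\pi$, with equality iff $A\equiv0$, i.e.\ iff $\Sigma$ is totally geodesic. Everything then reduces to proving $\mathrm{Area}(\Sigma)\ge2\pi$, and along the way I would obtain $\pi_1$--injectivity.

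Let $\rho=f_*\colon\pi_1(\Sigma)\to\pi_1(\boN)$; here $\pi_1(\Sigma)$ is free on two of its three peripheral classes $\gamma_1,\gamma_2,\gamma_3$, with $\gamma_1\gamma_2\gamma_3=1$. Since the core curve of a totally geodesic $2$--cusp end is a nontrivial parabolic of $\pi_1(\boN)$, each $\rho(\gamma_i)$ is a nontrivial parabolic and $\rho(\gamma_1)\rho(\gamma_2)=\rho(\gamma_3)^{-1}$ is parabolic too. If $\rho(\pi_1\Sigma)$ is non--elementary it is then a two--generated discrete subgroup of $\Isom(\H^3)$ whose two generators and their product are parabolic, hence (by the structure of such groups, Adams \cite{Ada}) free of rank $2$; then $\rho$ is a surjection between free groups of rank $2$, hence an isomorphism, so $f$ is $\pi_1$--injective. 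If instead $\rho(\pi_1\Sigma)$ is elementary it is parabolic abelian, lying up to conjugacy in a cusp subgroup $G$ of $\pi_1(\boN)$; then $f$ lifts to $\hat f\colon\Sigma\to\boN':=\H^3/G$, all three ends enter the single cusp $C$ of $\boN$ with holonomy $G$ (which embeds isometrically in $\boN'$), and using properness of $f$ (the compact core of $\Sigma$ maps to a compact set, the ends run off to totally geodesic $2$--cusps) one sees that $\hat f(\Sigma)$ is a complete, properly immersed minimal surface contained in a region $\{z\ge z_0\}/G$ with $z_0>0$. As $\{z_0\le z\le R\}/G$ is compact, the cusp coordinate $z$ attains a minimum on $\hat f(\Sigma)$ at an interior point $q$; there $\hat f(\Sigma)$ lies locally in $\{z\ge z(q)\}$ and is tangent to the horosphere $\{z=z(q)\}$, which has mean curvature $1$ with respect to the normal pointing into that side, contradicting minimality by the strong maximum principle for the mean curvature operator. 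Hence the elementary case is impossible and $f$ is $\pi_1$--injective.

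It remains to show $\mathrm{Area}(\Sigma)\ge2\pi$. Now $\Gamma_0:=\rho(\pi_1\Sigma)$ is a thrice--punctured sphere subgroup of $\pi_1(\boN)$ (free of rank $2$, three peripheral parabolics), so by rigidity (Adams \cite{Ada}) it is conjugate in $\Isom(\H^3)$ to the stabilizer of a totally geodesic plane $\Pi\cong\H^2$. Then $\Sigma_0:=\Pi/\Gamma_0$ is the complete totally geodesic thrice--punctured sphere, with $\mathrm{Area}(\Sigma_0)=2\pi$ by Gauss--Bonnet, sitting inside the cover $\boN_0:=\H^3/\Gamma_0\cong\Sigma_0\times\R$ of $\boN$. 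The immersion $f$ lifts to a proper minimal immersion $\hat f\colon\Sigma\to\boN_0$ inducing an isomorphism on $\pi_1$, hence homotopic to a homeomorphism onto $\Sigma_0$ composed with the inclusion, and mapping the $i$--th end of $\Sigma$ into the $i$--th cusp end of $\boN_0$. The nearest--point projection $\H^3\to\Pi$ is $1$--Lipschitz and $\Gamma_0$--equivariant, so it descends to a $1$--Lipschitz retraction $\bar P\colon\boN_0\to\Sigma_0$. Then $\bar P\circ\hat f\colon\Sigma\to\Sigma_0$ is proper (ends go to ends) and homotopic to a homeomorphism, hence has degree $\pm1$, and since $\hat f$ is an isometry onto its image and $\bar P$ is $1$--Lipschitz,
\begin{equation*}
\mathrm{Area}(\Sigma)\ \ge\ \int_\Sigma\big|(\bar P\circ\hat f)^*\,dA_{\Sigma_0}\big|\ \ge\ \Big|\int_\Sigma(\bar P\circ\hat f)^*\,dA_{\Sigma_0}\Big|\ =\ |\deg(\bar P\circ\hat f)|\cdot\mathrm{Area}(\Sigma_0)=2\pi .
\end{equation*}
Together with $\mathrm{Area}(\Sigma)\le2\pi$ from the first paragraph this forces $\mathrm{Area}(\Sigma)=2\pi$, hence $A\equiv0$ and $\Sigma$ is totally geodesic.

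The step I expect to be the main obstacle is the $\pi_1$--injectivity argument, and within it the claim that the lifted surface $\hat f(\Sigma)\subset\boN'$ is genuinely trapped above a fixed horosphere: this needs care with the properness of the lift and with the geometry of the ends as they leave the compact core of $\Sigma$. A secondary technical point is making the degree/area comparison for the proper map $\bar P\circ\hat f$ between the non--compact surfaces $\Sigma$ and $\Sigma_0$ rigorous (e.g.\ via compactly supported cohomology, using that every end is mapped properly).
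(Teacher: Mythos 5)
Your proposal is correct in outline and reaches the conclusion by a genuinely different route from the paper's. The two arguments share their input: Theorem~\ref{th:ftc} to make the three peripheral classes parabolic, the Adams-type trace computation to identify the image group as either elementary or the standard Fuchsian thrice-punctured sphere group, and the same horosphere maximum principle (tangency from above with the mean curvature one tori) to kill the elementary case --- this last step is literally the paper's argument ruling out $c=0$ in Claim~\ref{cl:adams}. Where you diverge is in how ``totally geodesic'' is extracted. The paper works in the universal cover: it shows the limit set of $\widetilde\Sigma$ is the invariant circle $C$ and then sweeps $\H^3$ by a foliation of totally geodesic planes having the plane $P$ bounded by $C$ as a leaf, trapping $\widetilde\Sigma$ between leaves approaching $P$ from both sides; this gives $\widetilde\Sigma=P$ directly and yields $\pi_1$-injectivity for free, since $P$ is simply connected and $\widetilde\Sigma$ is the quotient of the universal cover of $\Sigma$ by $\ker\phi$. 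You instead run an area-rigidity argument: Theorem~\ref{th:ftc} plus the Gauss equation give $\mathrm{Area}(\Sigma)+\tfrac12\int_\Sigma|A|^2=2\pi$, and the $1$-Lipschitz equivariant retraction onto $\Pi$ together with a degree count gives $\mathrm{Area}(\Sigma)\ge 2\pi$, forcing $A\equiv 0$; $\pi_1$-injectivity is obtained separately from the Hopfian property of the rank two free group. What your route buys is the sharp area identity and independence from the barrier construction; what it costs is exactly the point you flag: you must justify degree theory for the proper map $\bar P\circ\hat f$ between open surfaces (proper homotopy invariance, or a regular-value count far out in a cusp, together with the observation --- provable by abelianizing, which forces each peripheral image to be a primitive peripheral element --- that the three ends of $\Sigma$ map to the three distinct ends of $\Sigma_0$ with multiplicity one). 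Both of the worries you flag are the right ones, and both are fillable; the paper's sweep-out simply avoids them.
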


\begin{proof}
Let $\Sigma\subset \boN$ be a properly immersed minimal $3$-punctured sphere in $\boN$.
Let $x_0\in \Sigma$ and $\alpha, \beta, \gamma$ be three loops at $x_0$
that are freely homotopic to embedded loops in the different ends of $\Sigma$. 

Let $\pi:\H^3\rightarrow \boN$ be a universal covering map and $\tilde x_0$ be
in $\pi^{-1}(x_0)$. Let $\widetilde\Sigma$ be the lift of $\Sigma$ passing
through $\tilde x_0$. The choice of $\tilde x_0$ induces a
monomorphism $\phi:\pi_1(\Sigma,x_0)\rightarrow \textrm{Isom}^+(\H^3)$.
$\widetilde\Sigma$ is then a proper immersion of the quotient of the universal
cover of $\Sigma$ by $\ker \phi$. Let $\Gamma$ be the image of the monomorphism
$\phi$. As a consequence $\widetilde\Sigma$ is properly immersed in $\H^3$.Let
us denote by $T_\alpha$ , $T_\beta$ and $T_\gamma$ the maps in $\Gamma$
associated by $\phi$ to $[\alpha],[\beta],[\gamma]$.

By the Finite Total Curvature Theorem \ref{th:ftc}, we know
each end is asymptotic to $\mu\times \R_+$ where $\mu$ is a geodesic in some
$T(c)$ in a cusp end of $\boN$; $\mu\times\R_+$ is a totally geodesic annulus in
this cusp end. The inclusion of $T(c)$ into $\boN$ induces an injection of the
fundamental group of $T(c)$ into that of $\boN$. Hence $\alpha, \beta$ and
$\gamma$ are sent to non zero parabolic elements of $\textrm{Isom}^+(\H^3)$ by $\phi$. 

Next we will prove the limit set of $\widetilde\Sigma$ is a circle $C$ in
$\partial_\infty\H^3\simeq\S^2$. Then the maximum principle yields that $\widetilde\Sigma$
is the totally geodesic plane $P$ bounded by $C$, thus proving Theorem~C. More
precisely, foliate $\H^3\cup\partial_\infty \H^3$ minus two points by totally
geodesic planes and their asymptotic boundaries so that $P$ is one leaf of the
foliation. This foliation at $\partial_\infty \H^3$ is a foliation by circles
with two "poles" $p$ and $q$. The circles close to $p$ bound hyperbolic planes
$Q$ in $\H^3$ that are disjoint from $\widetilde\Sigma$. As the circles in the
foliations of $\partial_\infty \H^3$ go from $p$ to $C$, there can be no first
point of contact of the planes with $\widetilde\Sigma$ ($\widetilde\Sigma$ is
proper and the limit set of $\widetilde\Sigma$ is $C$). Hence $\widetilde\Sigma$
is in the half space of $\H^3\setminus P$ containing $q$. The same argument with
planes coming from $q$ to $C$ shows that $\widetilde \Sigma=P$. So
$\widetilde\Sigma$ is simply connected which implies $\ker\phi=\{1\}$ and
$\Sigma$ is $\pi_1$
injective.

Let us now prove the existence of $C$.  We have the following claim
whose proof is based on Adams work \cite{Ada}.

\begin{claim}\label{cl:adams}
There is a circle $C$ in $\partial_\infty\H^3$ which is invariant by
$\Gamma$. The limit set of $\widetilde \Sigma$ is $C$.
\end{claim}

\begin{proof}
Using the half space model for $\H^3$ with
$\infty$ the fixed point of $T_\alpha$ and using the $SL_2(\C)$ representation
of $\textrm{Isom}^+(\H^3)$ we can write,
$$
T_\alpha=
\begin{pmatrix}1&w\\0&1\end{pmatrix}\quad\quad T_\beta=\begin{pmatrix}a&b\\ c&d
\end{pmatrix}
$$ 
with $w\in\C^*$, $a,b,c,d\in \C$ such that $ad-bc=1$ and $a+d=2$. Then
$T_\gamma=T_\alpha\cdot T_\beta=\begin{pmatrix}a+cw&b+dw\\ c & d\end{pmatrix}$
is parabolic so it must satisfy to $\lambda=a+cw+d=\pm 2$.

Since $a+d=2$ we have $c=0$ if $\lambda=2$ or $c=-4/w$ if $\lambda=-2$. If
$c=0$, $T_\alpha$, $T_\beta$ and $T_\gamma$ would fix the point $\infty$ and all
elements in $\Gamma$ would have $\infty$ as fixed point and $\{\infty\}$ is the
limit set of $\Gamma$. We will rule out this possibility below.

If $c=-4/w$, the fixed point of $T_\beta$ is $x_\beta= \frac{w(d-a)}8$ and the
fixed point of $T_\gamma$ is $x_\gamma=\frac{w(d-a)}8+\frac w2$. $T_\alpha$
leaves invariant the circle $C=\{\frac{w(d-a)}8+tw, t\in\R\}\cup \{\infty\}$.
Also we have 
\begin{align*}
T_\beta(\infty)&=-\frac{wa}{4}=\frac{w(d-a)}8=\frac w4\in C\\
T_\beta(x_\gamma)&=T_\alpha^{-1}(x_\gamma)=\frac{w(d-a)}8-\frac w2\in C
\end{align*}
Hence $T_\beta$ leaves $C$ invariant. Thus $\Gamma$ leaves $C$ invariant.
Actually, $C$ is the limit set of $\Gamma$.

Now let us see that the limit set of $\widetilde\Sigma$ is the limit set $\partial\Gamma$ of $\Gamma$. We
split $\Sigma$ in the union of a compact part $K$ containing $x_0$ and three 
cusp ends $C_i$. So $\widetilde\Sigma$ split in the union of the lift
$\widetilde K$ of $K$ and the union of pieces contained in disjoint horoballs
$H_\alpha$ with boundary along the horoball. Because of the asymptotic behavior of $\Sigma$,
the lift $g_i$ of $\partial C_i$ in $\partial H_\alpha$ is not homeomorphic to a
circle. Thus there is a non trivial $\gamma\in \Gamma$ that leaves $g_i$ and then
$H_\alpha$ invariant. So the center of $H_\alpha$ belongs to $\partial\Gamma$. 

If $\partial\Gamma$ is only one point (case $c=0$) it means that there is only
one horoball $H_\alpha$: it is $\{z\ge c\}$. Since any point in $\widetilde K$
is at a finite distance from $H_\alpha$ and $\widetilde K$ is periodic, the $z$
function reaches its minimum somewhere. The maximum principle then get a
contradiction. So $c\neq 0$ and $\partial \Gamma=C$.

Let $(p_i)$ be a proper sequence of points in $\widetilde\Sigma$ and assume it
converges to some point $p_\infty\in\partial_\infty \H^3$. If all the $p_i$
belong to $\widetilde K$, there is a sequence of elements $\gamma_i\in\Gamma$
such that the distance between $p_i$ and $\gamma_i\cdot \tilde x_0$ stays
bounded. So $(p_i)$ and $(\gamma_i\cdot \tilde x_0)$ have the same limit so
$p_\infty$ is in the limit set of $\Gamma$ so in $C$.

So we can assume that $p_i\in H_{\alpha_i}$ for all $i$. If the sequence $(\alpha_i)$ is
finite; $p_\infty$ is a center of one of the $H_\alpha$ so it is in $C$. If the
sequence $(\alpha_i)$ is not finite the the distance from $\tilde x_0$ to
$H_{\alpha_i}$ goes to $\infty$. There is a neighborhood $N_m$ of $C$ in
$\H^3\cup\partial_\infty\H^3$ that contains all horoballs centered on $C$ whose
distance to $\tilde x_\infty$ is
larger than $m$ and such that $\cap_{m>0} N_m= C$. So $p_\infty\in C$.
\end{proof}
\end{proof}

The proof of the preceding result is based on the study of the group
representation $\phi$. It can be controlled under some other hypotheses.

\begin{prop}
Let $S_0$ and $S_1$ be two properly immersed minimal surfaces in $\boN$ such
that the immersions are homotopic. If $S_0$ is totally geodesic, then $S_0=S_1$.
\end{prop}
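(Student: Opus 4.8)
The statement asserts that if $S_0$ is a properly immersed totally geodesic minimal surface and $S_1$ is a properly immersed minimal surface whose immersion is homotopic to that of $S_0$, then they coincide. The natural approach is to lift everything to the universal cover $\H^3$, exactly as in the proof of Theorem D, and exploit that a totally geodesic $S_0$ lifts to (a collection of) totally geodesic planes, against which one can run the maximum principle.

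\textbf{Step 1: Set up the lifts and compare limit sets.} Fix a basepoint $x_0 \in S_0$ and let $x_1$ be its image under the homotopy, so the homotopy induces equality of the two monomorphisms $\phi_0, \phi_1 \colon \pi_1 \to \Isom^+(\H^3)$ up to conjugation; call the common image group $\Gamma$. Let $\pi \colon \H^3 \to \boN$ be the universal cover, and let $\widetilde S_0, \widetilde S_1$ be the lifts through chosen points in $\pi^{-1}(x_0), \pi^{-1}(x_1)$. Since $S_0$ is totally geodesic, $\widetilde S_0$ is a totally geodesic plane $P$, and its limit set is the circle $C = \partial_\infty P$, which is $\Gamma$-invariant. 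The key observation is that the limit set of $\widetilde S_1$ is contained in (in fact equals) the limit set of $\Gamma$: this is precisely the argument already carried out at the end of the proof of Claim~\ref{cl:adams} (decompose $S_1$ into a compact core plus ends lying in horoballs; points in the core track orbit points of $\Gamma$, points in the horoball pieces converge to horoball centers, which are parabolic fixed points hence in $\partial\Gamma$; a non-finite escape into horoballs is absorbed into shrinking neighborhoods of $\partial\Gamma$). Hence the limit set of $\widetilde S_1$ lies in $C = \partial_\infty P$.

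\textbf{Step 2: Run the plane-sweep maximum principle.} Once we know the limit set of $\widetilde S_1$ lies on the circle $C$, the foliation-by-totally-geodesic-planes argument from the Theorem D proof applies verbatim: foliate $\H^3 \cup \partial_\infty\H^3$ minus two poles $p, q$ by totally geodesic planes with $P$ one leaf; sweep planes from $p$ toward $C$; since $\widetilde S_1$ is proper and its limit set is on $C$, there is no first interior point of contact, so $\widetilde S_1$ lies in the closed half-space on the $q$-side of $P$; sweeping from $q$ shows $\widetilde S_1$ lies on the other side as well; therefore $\widetilde S_1 = P = \widetilde S_0$. Projecting down, $S_1 = S_0$ as subsets of $\boN$ (and the immersions agree up to reparametrization).

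\textbf{Main obstacle.} The delicate point is Step 1: justifying that the limit set of $\widetilde S_1$ is contained in the circle $C$. Unlike the $3$-punctured-sphere case, here $S_1$ may have more complicated topology, so one must argue more carefully that every escaping sequence in $\widetilde S_1$ converges into $\partial\Gamma$. One should first rule out the degenerate possibility that $\partial\Gamma$ is a single point — but this cannot happen, since $\widetilde S_0 = P$ is a plane and its limit set $C = \partial_\infty P$ is a genuine circle; thus $\Gamma$ is non-elementary and $\partial\Gamma = C$ is exactly a circle. Given that, the decomposition-into-core-plus-horoball-ends argument goes through: the subtlety is only to confirm that $S_1$, being properly immersed with finite topology (the hypothesis of the ambient setting, via Theorem~\ref{th:ftc}, gives each end asymptotic to a totally geodesic $2$-cusp end), genuinely has such a decomposition, so that escaping sequences are controlled as before. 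With that in hand, the maximum-principle sweep in Step 2 is routine and identical to the one already written.
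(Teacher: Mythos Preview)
Your proposal is correct and follows essentially the same route as the paper: lift the homotopy to $\H^3$ so that both immersions are equivariant for the same representation $\phi$, observe that the totally geodesic lift $\widetilde S_0$ is a plane $P$ with boundary circle $C$ (hence $\Gamma=\im\phi$ preserves $C$ and has limit set contained in $C$), reuse the Claim~\ref{cl:adams} argument to see that the limit set of $\widetilde S_1$ lies in $\partial\Gamma\subset C$, and then run the plane-sweep maximum principle to force $\widetilde S_1=P$. The only cosmetic differences are that the paper lifts the homotopy $f_t$ itself (yielding a single $\phi$ directly rather than two conjugate ones) and that the paper simply says ``representation'' rather than ``monomorphism''---injectivity of $\phi$ is neither assumed nor needed here.
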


\begin{proof}
Let $f_t:S\times[0,1]\rightarrow \boN$ be the homotopy between the two minimal
immersions. Let $x_0\in S$ and $\tilde{x_0}$ be a point in $\pi^{-1}(f_0(x_0))$
where $\pi:\H^3\rightarrow \boN$ is a covering map. Let $g_t:\widetilde
S\times[0,1]\rightarrow \boN$ be the lift of $f_t$ such that $g_0(x_0)=\tilde
x_0$. This defines a group representation $\phi:\pi_1( S\times[0,1],
(x_0,0))=\pi_1(S,x_0)\rightarrow \textrm{Isom}^+(\H^3)$ such that $g$ is
$\phi$-equivariant. Since $f_0(S)$ is totally geodesic, $g_0(\widetilde S)$ is a
totally geodesic disk. This implies that the image of $\phi$ has a circle $C$ as
limit set. As in the proof of Claim \ref{cl:adams}, it implies that
$g_1(\widetilde{S})$ has $C$ as asymptotic boundary. Then, as in the proof of
Theorem C, $g_1(\widetilde S)$ is totally geodesic and equals $g_0(\widetilde
S)$. So $f_0(S)=f_1(S)$.
\end{proof}

\section{Minimal surfaces in the cusp ends of $\boN$}
\label{sec:maxtrans}


In this section, we will analyse the behaviour of embedded minimal surfaces that
enter cusp ends of $\boN$. In dimension $2$, the situation is simple. If $N^2$
is a $2$-cusp (\textit{i.e.} a quotient of a horodisk of $\H^2$ by a parabolic
isometry leaving the horodisk invariant) then a geodesic that enters $N^2$
either goes straight to the cusp (\textit{i.e.} it is an orthogonal trajectory
of the horocycles of the cusp) or it leaves $N^2$ in a finite time. In dimension
$3$, for the moment we know that a properly immersed minimal annulus that enters
a cusp end of $\boN^3$ is asymptotic to a $2$-cusp of the end
($\gamma\times[c,\infty)$, $\gamma$ a compact geodesic of $T(c)$), or the
intersection of the minimal annulus with the end of $\boN$ is compact.

We will establish two maximum principles in the ends of $\boN$ which will
control the geometry of embedded minimal surfaces in the ends.

Let $C=M/G(v_1,v_2)$ be an end of $\boN$, parametrized by the quotient of
$M=\{(x,y,z)\in\R^3|z\ge 1/2\}$ as in Section \ref{sec:discussion}, with
$\Lambda(C)=\Lambda(v_1,v_2)$ the diameter of $T(1)$. Recall that we can make
$\Lambda(C)$ as small as we wish by passing to a subend $C'$ of $C$ defined by
$z\ge z_0$, $z_0$ large.

We modify the metric on $C$ introducing smooth functions
$\Psi:[1/2,\infty)\rightarrow \R$ satisfying $\Psi(z)=z$ for $1/2\le z\le 1$,
and $\Psi$ non decreasing. There will be other conditions on $\Psi$ as we
proceed.

Let $g_\Psi=\frac1{\Psi^2(X)}dX^2$ be a new metric on $C$; $g_\Psi$ is the
hyperbolic metric of $\boN$ for $1/2\ge z\ge 1$.

The mean curvature of the torus $T(z)$ in the metric $g_\Psi$ equals $\Psi'(z)$,
with respect to the unit normal $\Psi(z)\partial_z$ (so points towards
the cusp: perhaps it is zero). The sectional curvatures for $g_\Psi$ are:
$$
K_{g_\Psi}=\begin{cases}
-\Psi'(z)^2\text{ for the }(\partial_x,\partial_y)\text{ plane}\\
\Psi(z)\Psi''(z)-\Psi'(z)^2\text{ for the }(\partial_x,\partial_z)\text{ and }
(\partial_y,\partial_z)\text{ planes}
\end{cases}
$$

We will always introduce $\Psi$'s such that $|\Psi'|$ and $|\Psi\Psi''|$
are bounded by some fixed constant. Hence the sectional curvatures of the new
metrics will be uniformly bounded as well. Then given $\eps_0>0$, there is a
$k_0>0$ such that a stable minimal surface in $(C,g_\Psi)$ has curvature
bounded by $k_0$ at all points at least at a distance $\eps_0$ from the
boundary. The bound $k_0$ depends only on the bound of the sectional curvatures and
$\eps_0$ not on the injectivity radius \cite{RoSoTo}

\begin{remarq}\label{rem:isom2}
The pull back of the $g_\Psi$ by the map $H$ defined in Remark~\ref{rem:isom} is
$H^*{g_\Psi}=g_{\Psi_{z_0}}$ where $\Psi_{z_0}(z)=\frac1{z_0}\Psi(z_0z)$. This
modification does not change the estimates on $\Psi'$ and $\Psi\Psi''$.
\end{remarq}


\subsection{Maximum principles}
In the section we prove maximum principles for a cusp end $C$ endowed with a
metric $g_\Psi$. The following estimates will depend on an upper-bound on
$|\Psi'|$ and $|\Psi\Psi''|$.

We have a first result.

\begin{prop}[Maximum principle I]
Let $k_0,\eps_0>0$. There is a $\Lambda_0=\Lambda(k_0,\eps_0)$ such that if
$\Sigma$ is an embedded minimal surface in $(C,g_\Psi)$ with $|A_\Sigma|\le k_0$
and $\Lambda(C)\le \Lambda_0$. Then if $p\in \Sigma$ is at least an intrinsic
distance $\eps_0$ from $\partial\Sigma$ and if $z(q)\le z(p)$ for all $q$ in
the intrinsic $\eps_0$-disc centered at $p$, then $\Sigma=\{z=z(p)\}$ and
$\Psi'(z(p))=0$.
\end{prop}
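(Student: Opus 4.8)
The strategy is a standard maximum-principle argument adapted to the cusp geometry, using the height function $z$ as the test function. Suppose $p\in\Sigma$ realizes an interior local maximum of $z$ in the sense stated, and write $z_0:=z(p)$. The plan is to compare $\Sigma$ near $p$ with the torus $T(z_0)=\{z=z_0\}$, which is a graph over itself, and to use the fact that the mean curvature of $T(z_0)$ with respect to the upward normal $\Psi(z)\partial_z$ equals $\Psi'(z_0)\ge 0$. Since $\Sigma$ is minimal and lies (locally, within the $\eps_0$-disc around $p$) in the region $\{z\le z_0\}$, it is tangent to $T(z_0)$ at $p$ from below. If $\Psi'(z_0)>0$, the torus has strictly positive mean curvature pointing into $\{z>z_0\}$, i.e. away from $\Sigma$; then the interior geometric maximum principle (comparing $\Sigma$ with the minimal-or-mean-convex barrier $T(z_0)$ at the tangency point $p$) is immediately violated. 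Hence $\Psi'(z_0)=0$, and in that case $T(z_0)$ is itself a minimal surface, so the interior maximum principle for two minimal surfaces tangent at an interior point forces $\Sigma$ to coincide with $T(z_0)$ in a neighbourhood of $p$; a connectedness/continuation argument then gives $\Sigma=\{z=z_0\}$ globally.

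The role of the hypotheses $|A_\Sigma|\le k_0$, $p$ at intrinsic distance $\ge\eps_0$ from $\partial\Sigma$, and $\Lambda(C)\le\Lambda_0(k_0,\eps_0)$ is to guarantee that the piece of $\Sigma$ we are looking at is genuinely a small graph over a piece of $T(z_0)$ so that the tangential comparison makes sense and the "first point of contact" reasoning is legitimate. Concretely, I would argue: the curvature bound $k_0$ together with the distance bound $\eps_0$ gives a uniform lower bound on the size of the intrinsic $\eps_0$-disc $D$ around $p$ and controls how far $D$ can wander in the ambient metric; then choosing $\Lambda_0$ small forces the extrinsic diameter of $D$, measured in the flat directions on the tori, to be small compared with the shortest translation length of $G$, so that $D$ lifts to a genuine graph in the horoball cover $M$ and does not wrap around the cusp. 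Equivalently, passing to the universal (or intermediate) cover, $D$ is an embedded minimal disc in $(M,g_\Psi)$ lying below the horosphere $\{z=z_0\}$ and touching it at the lift of $p$; this is the clean setting in which to run the comparison.

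With that reduction in place, the core computation is to verify that the horospheres (equivalently the tori $T(c)$) are the level sets of a function whose mean curvature has the sign claimed, which is exactly the statement recorded just before the proposition: the mean curvature of $T(z)$ in $g_\Psi$ is $\Psi'(z)\ge 0$ with respect to $\Psi(z)\partial_z$. One then invokes either Alexandrov's boundary-point maximum principle or, more simply here, the interior version: if two hypersurfaces are tangent at an interior point, one lies locally on one side of the other, and at that point $H_1\le H_2$ with respect to consistently chosen normals, then if additionally $\Sigma$ is minimal and the mean convexity of the barrier is strict we get a contradiction, while if it is not strict ($\Psi'(z_0)=0$) the two surfaces agree. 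I would phrase this as: let $u$ be the height of $\Sigma$ as a graph over $T(z_0)$ near $p$; then $u\le z_0$ with equality at $p$, and $u$ satisfies the minimal surface equation in $g_\Psi$ while the constant function $z_0$ is a supersolution (because $\Psi'(z_0)\ge 0$); the strong maximum principle for the (quasilinear, uniformly elliptic on the small graph) difference operator gives $u\equiv z_0$ near $p$, and examining the equation satisfied by $u\equiv z_0$ forces $\Psi'(z_0)=0$. Propagating by a standard open-and-closed argument along $\Sigma$ yields $\Sigma=\{z=z_0\}$.

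The main obstacle I expect is the geometric bookkeeping in the second paragraph: making precise, with explicit dependence of $\Lambda_0$ on $k_0$ and $\eps_0$ only, that the $\eps_0$-disc around $p$ embeds as a small graph in the cover and stays in a region where the comparison with $T(z_0)$ is valid (in particular that no point of this disc escapes below $z=1$ where the metric is genuinely hyperbolic and the curvature signs could differ, and that the disc does not come back up around the torus). The analytic heart --- the strong maximum principle --- is routine once the geometry is set up; the interplay between the thinness of the cusp ($\Lambda$ small), the curvature bound, and the intrinsic size $\eps_0$ is where the real care is needed, and this is presumably why the statement is quantified exactly as it is.
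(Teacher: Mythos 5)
Your geometric setup (lifting to the cover, using the curvature bound $k_0$ and the interior distance $\eps_0$ to write a neighbourhood of $p$ in $\widetilde\Sigma$ as a graph $u$ over a disc $D_\mu$ of uniform radius, and taking $\Lambda_0$ small compared with $\mu$) matches the paper's. But the analytic heart of your argument is wrong, and the error sits exactly at the point the proposition is designed to address. At the local maximum $p$ of $z$, the surface $\Sigma$ touches the torus $T(z_0)$ from \emph{below}, i.e.\ from the side \emph{away from} which the mean curvature vector of $T(z_0)$ points (the normal $\Psi(z)\partial_z$ in the statement of the mean curvature points towards the cusp, upwards). This is the \emph{admissible} configuration for a tangency between a minimal surface and a mean-convex barrier --- the analogue of a plane tangent to a round sphere from outside --- so no contradiction follows from $\Psi'(z_0)>0$. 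In the graph formulation: with $u\le v\equiv z_0$ and equality at $p$, the difference $w=v-u\ge 0$ satisfies $Lw=H[v]-H[u]=\Psi'(z_0)\ge 0$ (mean curvatures with respect to the upward normal), which is perfectly compatible with $w$ having an interior zero minimum; the strong maximum principle would bite only if $Lw\le 0$, i.e.\ $\Psi'(z_0)\le 0$. A concrete counterexample to your claimed step: a totally geodesic hemisphere $\{x^2+y^2+z^2=R^2\}$ in the upper half-space model is minimal, has a strict interior maximum of $z$ at its top, and is tangent there to the horosphere $\{z=R\}$, whose mean curvature is $1>0$; nothing is violated. What the maximum principle does forbid is an interior \emph{minimum} of $z$, where the minimal surface touches $T(c)$ from the cusp side (this is exactly how the paper uses it in Maximum Principle II and in Claim 2 of Theorem \ref{th:isotopy}).

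The missing idea is the globalisation that converts the local maximum at $p$ into a minimum of an entire periodic graph. Since $\Lambda_0<\mu/2$, the translates of the graph of $u$ by the generators $v_1,v_2$ of $G$ overlap $D_\mu$; because $u$ attains its maximum at the origin, one has $u_1(v_1)=u(0)\ge u(v_1)$ and $u_1(0)=u(-v_1)\le u(0)$ for $u_1(\cdot)=u(\cdot-v_1)$, so the two graphs intersect, and embeddedness of $\widetilde\Sigma$ forces $u=u_1$ on the overlap. Iterating over $G=\Z v_1+\Z v_2$ extends $u$ to an entire $G$-periodic minimal graph contained in $\widetilde\Sigma$. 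Periodicity makes $u$ bounded below and it attains a minimum, and \emph{there} the comparison with the torus goes the right way: one concludes $u$ is constant and $\Psi'(z(p))=0$. Without this step (or a substitute for it), your proof does not go through; your second paragraph correctly anticipates the need to control wrapping around the cusp, but the smallness of $\Lambda$ is used positively, to create the overlaps that propagate the graph, not merely to prevent self-intersection.
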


\begin{proof}
Let $\pi: M\rightarrow C$ be the covering projection and
$\widetilde\Sigma=\pi^{-1}(\Sigma)$. We may suppose $p=\pi(0,0,z(p))$.

Since the curvature of $\widetilde\Sigma$ is bounded by $k_0$,
$\widetilde\Sigma$ is a graph of bounded geometry in a neighborhood of $p$.
Hence the exists $\mu=\mu(k_0,\eps_0)$ and a smooth function
$u:D_\mu(0,0)\rightarrow\R$, $D_\mu(0,0)=\{x^2+y^2\le \mu\}$, $u(0,0)=z(p)$ and
the graph of $u$ in $M$ is a subset of $\widetilde\Sigma$. $\mu$ can be chosen
such that, if $q\in \text{graph}(u)$, $d_{\Sigma}(\pi(q),p)<\eps_0$. Hence
$z(p)$ is a maximum value of $u$ in $D_\mu(0,0)$.

Now $\widetilde\Sigma$ is invariant by $G(v_1,v_2)$ so if $\Lambda_0<\mu/2$, we
have $v_1, v_2\in D_{\mu/2}(0,0)$. So $D_\mu(0,0)\cap D_\mu(v_1) = D \neq
\emptyset$.

Let $u_1:D\rightarrow \R$, be $u_1(q)=u(q-v_1)$; the graph of $u_1$ is contained
in $\widetilde \Sigma$. Then $u_1(v_1)=u(O)\ge u(v_1)$ since $u$ reaches its
maximum at $O=(0,0)$. Also $O\in D$, $u_1(O)=u(-v_1)\le u(O)$. Thus the graphs of
$u$ and $u_1$ over $D$ must intersect and since $\widetilde\Sigma$ is embedded,
$u=u_1$
on $D$. Hence $u_1$ is a smooth continuation of $u$ to $D_\mu(O)\cup
D_\mu(v_1)$. Repeating this with $G=\Z v_1+\Z v_2$, we see that $u$ extends
smoothly to an entire minimal graph contained in $\widetilde\Sigma$. This graph
is periodic with respect to $G$ hence bounded below. The maximum principle at a
minimum point of $u$ implies that $u$ is constant. Hence $u=u(0,0)=z(p)$ and
$T(z(p))$ is minimal so $\Psi'(z(p))=0$.
\end{proof}

Next we use the maximum principle I to prove a compact embedded minimal surface
can not go far into a cusp end; no \textit{a priori} curvature bound
assumed. More precisely we have the following statement.
 
\begin{prop}[Maximum principle II]\label{prop:max2} Let $0< t_0< 1/2$. There is a
$\Lambda_0=\Lambda_0(t_0)$ such that if $\Lambda(C)\le \Lambda_0$ and $\Sigma$
is a compact embedded minimal surface in $(C,g_\Psi)$ with $\partial\Sigma\subset
T(1-t_0)$ ($\Sigma$ being transverse to $T(1-t_0)$) then $\Sigma\subset\{z\le
1\}$.
\end{prop}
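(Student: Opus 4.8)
The plan is to use the already-established Maximum Principle I as a barrier, together with the mean-convexity of the tori $T(c)$ for $c>1$ in the metric $g_\Psi$, to trap $\Sigma$ below the level $z=1$. The key point is that we must first upgrade the hypotheses of Maximum Principle II (no a priori curvature bound) to a situation where a curvature bound is available, so that Maximum Principle I applies; this is where the parameter $t_0$ and the boundary condition $\partial\Sigma\subset T(1-t_0)$ enter.

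First I would argue by contradiction: suppose $\Sigma$ is not contained in $\{z\le 1\}$, so $z_{\max}:=\max_\Sigma z>1$, attained at some interior point $p$ (interior since $\partial\Sigma\subset T(1-t_0)$ and $t_0<1/2$, so the boundary sits at height $<1$). I would choose the function $\Psi$ so that on $[1,\infty)$ it behaves like a fixed mean-convex profile with $\Psi'>0$ there (and $|\Psi'|,|\Psi\Psi''|$ bounded, as required in the setup). The first substantive step is to produce a curvature bound near $p$. The natural device is to move up a family of horizontal tori $T(c)$ from above: for $c$ slightly larger than $z_{\max}$, $T(c)$ is disjoint from $\Sigma$, and as we decrease $c$ there is a first point of contact $c=c^*$. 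Since $T(c)$ has mean curvature $\Psi'(c)>0$ pointing toward the cusp and $\Sigma$ is minimal, the usual tangency comparison shows this first contact cannot occur, unless $c^*=z_{\max}$ itself — but in any case this kind of argument alone only re-proves mean-convex trapping if the ambient curvature cooperates; in our metric the $(\partial_x,\partial_z)$-plane curvature is $\Psi\Psi''-\Psi'^2$, which need not be nonpositive, so the naive maximum-principle-at-infinity argument must be replaced.

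The honest route, I expect, is: (i) shrink to a subend so that $\Lambda(C)\le\Lambda_0(t_0)$ with $\Lambda_0$ to be fixed; (ii) observe that the slab $\{1-t_0\le z\le z_{\max}\}$ has \emph{a priori} bounded geometry (sectional curvatures controlled by the bounds on $\Psi',\Psi\Psi''$), and $\Sigma$ is an embedded minimal surface there whose boundary lies on the bottom torus; (iii) obtain an interior curvature bound for $\Sigma$ away from its boundary — either because the relevant portion of $\Sigma$ near its top is stable, or, more robustly, by a standard compactness/blow-up argument: if no uniform curvature bound held, a rescaling limit would be a complete minimal surface in $\R^3$ (or in flat $\R^2\times\R$) passing through a maximum of the height function, contradicting the Euclidean maximum principle; here the smallness of $\Lambda(C)$ guarantees the limit is genuinely a plane and forces the contradiction. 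Once a curvature bound $|A_\Sigma|\le k_0$ is in hand on, say, the region at intrinsic distance $\ge\eps_0$ from $\partial\Sigma$ (with $\eps_0$ chosen smaller than the distance from height $z_{\max}$ down to $T(1-t_0)$, which is $\ln z_{\max}-\ln(1-t_0)\ge \ln 1 -\ln(1-t_0)=-\ln(1-t_0)>0$, so $\eps_0$ can be chosen depending only on $t_0$), I would apply Maximum Principle I at the point $p$: $p$ is at intrinsic distance $\ge\eps_0$ from $\partial\Sigma$, and $z$ is maximized on the intrinsic $\eps_0$-disc about $p$, hence $\Sigma=\{z=z(p)\}$ and $\Psi'(z(p))=0$. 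But $z(p)=z_{\max}>1$, where we arranged $\Psi'>0$ — contradiction. Therefore $z_{\max}\le 1$, i.e.\ $\Sigma\subset\{z\le 1\}$.

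The main obstacle is step (iii): getting the a priori curvature bound on $\Sigma$ near its top without assuming stability. I would handle it by the blow-up/compactness argument sketched above, using that the ambient sectional curvatures are uniformly bounded (so Heinz-type interior estimates / the standard "minimal surfaces of small curvature are graphs" machinery applies after rescaling) and that $\Lambda_0(t_0)$ is chosen small enough that near the top the surface is squeezed into a region of Euclidean-type geometry where a rescaled limit through an interior maximum of a coordinate function is a flat plane — impossible. The choice of $\Lambda_0$ and $\eps_0$ must be made to depend only on $t_0$ (via the fixed bounds on $\Psi$), which is exactly what the statement asserts, so all the quantifiers line up.
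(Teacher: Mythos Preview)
Your overall architecture---reduce to Maximum Principle I by first producing a curvature bound on an $\eps_0$-disc about the top point---is exactly what the paper does in the \emph{stable} case. The gap is in your step~(iii), where you try to manufacture this curvature bound for a general (possibly unstable) compact embedded $\Sigma$ by a blow-up/compactness argument. That argument does not close. If you rescale at the height-maximum $p$, convergence of the rescaled surfaces requires an \emph{a priori} bound on $|A|$ in a neighborhood of $p$ after rescaling, which you do not have; if instead you do the usual point-picking to ensure convergence, the blow-up center $q_n$ is no longer the height maximum, so the limit has no reason to lie on one side of a plane and you cannot invoke the Euclidean maximum principle. Also note that after rescaling by $|A|(p_n)\to\infty$, the period vectors $v_1,v_2$ dilate to infinity, so the smallness of $\Lambda(C)$ plays no role in the blow-up limit; it cannot ``force the limit to be a plane'' as you suggest. (At the single point $p$ one does get $|A_\Sigma|(p)\le\sqrt{2}\,\Psi'(z(p))$ from the one-sided tangency with $T(z(p))$, but this pointwise bound does not propagate to an $\eps_0$-disc without additional input.)

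The paper circumvents this by never trying to bound the curvature of $\Sigma$ itself. Instead it replaces $\Sigma$ by a \emph{stable} surface lying above it: one shows $\Sigma$ separates the slab $X=\{1/2\le z\le c\}$ (a homology argument using that any loop in $X$ is homologous to one in a torus $T(c_0)$ disjoint from $\Sigma$), takes the component $A$ of $X\setminus\Sigma$ containing $T(c)$, modifies $\Psi$ near $z=c$ so that $T(c)$ is minimal and $A$ becomes mean-convex, and solves the Plateau problem in $A$ with $\partial\widetilde\Sigma=\partial\Sigma$. The least-area surface $\widetilde\Sigma$ is stable, hence has curvature bounded by a constant depending only on the fixed bounds on $\Psi',\Psi\Psi''$ and on $t_0$; the stable case (Maximum Principle~I) then gives $\widetilde\Sigma\subset\{z\le1\}$. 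Since $\widetilde\Sigma$ lies in $A$, its $z$-maximum dominates that of $\Sigma$, so $\Sigma\subset\{z\le1\}$ as well. This ``stable replacement on the outer side'' is the missing idea in your proposal.
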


\begin{proof}
First suppose $\Sigma$ is a stable minimal surface. Then by curvature bounds for
stable surfaces \cite{RoSoTo}, we know there is a $k_0$ such that $|A_\Sigma|\le
k_0$ on $\Sigma\cap\{z\ge 1-t_0/2\}$; $k_0$ depends on our assumed bounds on
$\Psi', \Psi\Psi''$. By the maximum principle I, there is a $\Lambda_0$,
only depending on $t_0$, such that if $\Lambda(C)\le \Lambda_0$ then $z$ has no
maximum larger than $1$. Hence $\Sigma\subset\{z\le 1\}$.

Now suppose that $\Sigma$ is not stable. Choose $c_0$ and $c$ so that $z<c_0<c$
on $\Sigma$ and consider $\Sigma\subset X=\{1/2\le z \le c\}$. We remark that
$\Sigma$ separates $\inter{X}$, the interior of $X$. Indeed any loop in
$\inter{X}$ is homologous to a loop in $T(c_0)$ which does not intersect
$\Sigma$.  So the intersection number mod $2$ of a loop with $\Sigma$ is always
$0$. Then denote by $A$ the
connected component of $X\setminus \Sigma$ which contains $\{z=c\}$. \textit{A
priori} the boundary of $A$ is mean convex except for $\{z=c\}$. But we can
modify the function $\Psi$ for $c_0\le z\le c$ such that $\Psi'(c)=0$ and
keeping $\Psi$ non-decreasing and the bounds on $\Psi'$ and $\Psi\Psi''$ ($c$
should be assumed very large). Then $T(c)$ is now
minimal and $A$ has mean convex boundary. In $A$, there exist a least area
surface $\widetilde\Sigma$ with $\partial\Sigma=\partial\widetilde\Sigma$ . Now
the maximum of the $z$ function on $\widetilde\Sigma$ is larger than the one on
$\Sigma$. Since $\widetilde\Sigma$ is stable we already know that
$\widetilde\Sigma\subset\{z\le 1\}$, so $\Sigma$ as well.
\end{proof}


\subsection{Transversality}

Now we will see that embedded minimal surfaces of bounded curvature are
"strongly transversal" to $T(c)$ in $C$ endowed with the hyperbolic metric.

\begin{prop}\label{prop:transvers}
Let $k_0, \eps_0>0$ be given. There exist constants $\Lambda_0$
and $\theta_0$ such that if $\Sigma$ is an embedded minimal surface in
$(C,g_\H)$,
$\Lambda(C)\le \Lambda_0$, $|A_\Sigma|\le k_0$, with $\partial \Sigma$ at an
intrinsic distance distance greater than $\eps_0$ of the points of $\Sigma$ in
$T(1)$, then the angle between $\Sigma$ and $T(1)$ is at least $\theta_0$.
The constant $\Lambda_0$ and $\theta_0$ only depend on $k_0$ and $\eps_0$.
\end{prop}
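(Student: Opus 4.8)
The plan is to argue by contradiction using a compactness/blow-up argument, exploiting Maximum Principle~I. Suppose the statement fails. Then there is a fixed pair $k_0,\eps_0>0$ and sequences $\Lambda_n\to 0$, cusp ends $C_n=M/G_n$ with $\Lambda(C_n)=\Lambda_n$, and embedded minimal surfaces $\Sigma_n\subset(C_n,g_\H)$ with $|A_{\Sigma_n}|\le k_0$, whose boundary is at intrinsic distance $>\eps_0$ from the points of $\Sigma_n$ lying in $T(1)$, and yet there is a point $p_n\in\Sigma_n\cap T(1)$ at which the angle between $\Sigma_n$ and $T(1)$ is less than $1/n$. First I would lift to the universal cover: let $\pi_n:M\to C_n$ and $\widetilde\Sigma_n=\pi_n^{-1}(\Sigma_n)$, and choose lifts $\tilde p_n$ of $p_n$; after applying a horizontal translation of $M$ we may assume $\tilde p_n=(0,0,1)$. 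Since $|A_{\widetilde\Sigma_n}|\le k_0$ and $\widetilde\Sigma_n$ passes through $(0,0,1)$, on a fixed ball $B_\rho(\tilde p_n)$ (radius $\rho=\rho(k_0)$) the component of $\widetilde\Sigma_n$ through $\tilde p_n$ is a graph $z=u_n(x,y)$ over a disc $D_\mu(0,0)$ in the $T(1)$-direction, with uniformly bounded $C^{2,\alpha}$ norm and $u_n(0,0)=1$.

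The key step is to pass to a limit. By the Arzelà–Ascoli theorem and elliptic estimates, a subsequence of $u_n$ converges in $C^2_{\mathrm{loc}}$ to a minimal graph $u_\infty:D_\mu(0,0)\to\R$ with $u_\infty(0,0)=1$, and $\nabla u_\infty(0,0)=0$ because the angle between $\mathrm{graph}(u_n)$ and $T(1)$ at $\tilde p_n$ tends to $0$, i.e. $\mathrm{graph}(u_\infty)$ is tangent to the horosphere $\{z=1\}$ at $(0,0,1)$. Simultaneously, since $\Lambda(C_n)=\Lambda_n\to 0$, the generators $v_1^n,v_2^n$ of $G_n$ tend to $0$, so — exactly as in the proof of Maximum Principle~I, iterating the embeddedness/maximum-principle argument across translates $u_n(\cdot-v_i^n)$ — the graph $\mathrm{graph}(u_n)$ actually extends to an entire periodic minimal graph over all of $\R^2$; equivalently the limit $u_\infty$ is an entire minimal graph in $(M,g_\H)=\H^3$. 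But by Maximum Principle~I applied directly at $p_n\in\Sigma_n$ (where, because the graph is tangent to $T(1)$ from one side up to higher order, $z$ attains a local max on the $\eps_0$-disc), or more simply because an entire minimal graph $z=u_\infty(x,y)$ in $\H^3$ that is tangent to a horosphere $\{z=1\}$ from below must by the maximum principle for the minimal surface equation be the horosphere itself — so $u_\infty\equiv 1$ and $T(1)$ is minimal in $g_\H$. This contradicts the fact that $T(1)$ has mean curvature $1\ne 0$ with respect to $z\partial_z$ in the hyperbolic metric.

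To make the contradiction sharp, I would run the argument symmetrically: the tangency forces $\mathrm{graph}(u_n)$ to lie on one side of $T(1)$ to second order near $\tilde p_n$, so the limit $u_\infty$ satisfies $u_\infty\ge 1$ (or $\le 1$) on $D_\mu$ with equality and vanishing gradient at the center; the strong maximum principle comparing the minimal surface $\mathrm{graph}(u_\infty)$ with the constant-mean-curvature-one surface $\{z=1\}$ at the interior contact point $(0,0,1)$ is then impossible unless they coincide, which they cannot since their mean curvatures differ. I expect the main obstacle to be the bookkeeping in the extension step: verifying that the graph really propagates over all of $\R^2$ uniformly in $n$ requires that the size $\mu$ of the initial graphing disc (which depends only on $k_0,\eps_0$) stays larger than $2\Lambda_n$ and that the iterated applications of embeddedness plus the interior maximum principle do not degrade the $C^2$ bounds — but this is precisely the mechanism already used in the proof of Maximum Principle~I, so it transfers with only notational changes. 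The choice of $\theta_0$ and $\Lambda_0$ then comes out of the compactness argument: if no such $\theta_0$ worked for a given $\Lambda_0$ (small enough that the graphing-and-extension machinery runs), we produced the contradiction above.
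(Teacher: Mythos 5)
Your setup (contradiction, lifting, graphing over a disc via the curvature bound, passing to a limit graph $u_\infty$ tangent to $T(1)$, and extracting from embeddedness the one-sided ordering $u_n(\cdot)\le u_n(\cdot-v)$ or $\ge$ under translates of the lattice) matches the paper up to the point where the contradiction must be produced. But the contradiction you propose does not work. A minimal graph tangent to the horosphere $\{z=1\}$ from below is \emph{not} forbidden by the maximum principle: the horosphere has mean curvature $1$ with respect to $z\partial_z$, so the mean curvature inequality in the comparison principle goes the wrong way for a minimal surface lying on the side $\{z\le 1\}$. Indeed the totally geodesic plane tangent to $\{z=1\}$ at $(0,0,1)$ is exactly such a surface --- it is minimal, tangent to the horosphere, and lies strictly below it away from the contact point. (This totally geodesic graph $h\le 1-\alpha(x^2+y^2)$ is precisely the comparison surface the paper uses, for a different purpose.) Moreover, your claim that tangency forces $u_\infty$ to lie on one side of $T(1)$ is unjustified --- the limit surface may cross the horosphere like a saddle --- and your extension of $u_n$ to an entire periodic graph does not transfer from Maximum Principle~I: there the equality $u=u_1$ on the overlap of translated discs came from $z$ attaining a \emph{maximum} at the center, which gave inequalities in both directions; here there is no extremum of $z$, so embeddedness only yields the one-sided ordering, not equality, and the graph need not propagate.

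The missing idea is how to turn the one-sided ordering into a contradiction without any extremum of $z$. The paper does this by comparing $u$ (the limit graph, tangent to $T(1)$ with $\nabla u(0,0)=0$) with the totally geodesic graph $h$ having the same $1$-jet at the origin: the difference $u-h$ of two solutions of the minimal surface equation with the same value and gradient behaves near the origin like a nonzero harmonic polynomial of degree $d\ge 2$ (unless $u\equiv h$, which is also fine since then $u<1$ off the origin by the strict concavity of $h$). If $d=2$ one finds $v_0$ with $(u-h)(\pm v_0)<0$, hence $u(v_0)<u(0,0)$ and $u(-v_0)<u(0,0)$; if $d\ge 3$ the quadratic decay of $h$ dominates and $u<1=u(0,0)$ on a punctured disc. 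Either way one gets simultaneously $u(v_0)<u(v_0-v_0)$ and $u((0,0)-v_0)<u(0,0)$, contradicting the fact that $u(\cdot)\le u(\cdot-v_0)$ or $u(\cdot)\ge u(\cdot-v_0)$ holds on all of $D\cap(D+v_0)$ (which follows, as you noted, from embeddedness and $\Lambda(C_n)\to 0$, since integer combinations of the generators approximate any $v_0$). You would need to supply this local-structure argument, or an equivalent, to close the proof.
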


\begin{proof}
If this proposition fails, there exists $\Sigma_n, p_n\in \Sigma_n\cap T(1)$ in
a hyperbolic cusp $C_n$ satisfying the hypotheses, such that $\Lambda(C_n)$ and
the angle between $\Sigma_n$ and $T(1)$ at $p_n$
goes to zero. Lift
$\Sigma_n$ to $M$ so that $p_n=(0,0,1)$. The curvature bound gives the
existence of a disk $D=D_\mu(0,0)\subset \R^2$ and smooth functions $u_n$ on $D$
whose graphs are contained in $\Sigma_n$ (for large $n$). These functions
have bounded $C^{2,\alpha}$ norm by the curvature bound and the fact that their
gradient at $(0,0)$ converges to zero. Hence a subsequence of the $u_n$
converges to a minimal graph $u$ over $D$ and the graph of $u$ is tangent to
$T(1)$ at $(0,0,1)$.

Let $v_1^n,v_2^n$ be the generator of the group leaving $C_n$ invariant. 
Let $v_0$ be in $D$. Since $\Lambda(C_n)\rightarrow 0$, there is a sequence
$(a_1^n,a_2^n)_{n\in\N}$ in $\Z^2$ such that
$a_1^nv_1^n+a_2^nv_2^n\rightarrow v_0$. The graph of
$u_n(\cdot-(a_1^nv_1^n+a_2^nv_2^n))$ over $D+a_1^nv_1^n+a_2^nv_2^n$ is also a
part of a lift of $\Sigma_n$. Since $\Sigma_n$ is embedded, its lift is also
embedded. So, for any $n$, we have either $u_n(\cdot)\le
u_n(\cdot-(a_1^nv_1^n+a_2^nv_2^n))$ or $u_n(\cdot)\ge 
u_n(\cdot-(a_1^nv_1^n+a_2^nv_2^n))$. Thus at the limit, $u(\cdot)\le u(\cdot-v_0)$
or $u(\cdot)\ge u(\cdot-v_0)$ on $D\cap (D+v_0)$.

Let $S$ be the totally geodesic surface in $M$ tangent to $\{z=1\}$ at
$(0,0,1)$. Over $D$, $S$ can be described as the graph of a radial function $h$.
We have $h(0,0)=1$ and there is $\alpha>0$ such that, over $D$, $h((x,y)\le
1-\alpha(x^2+y^2)$. The functions $u$ and $h$ are two solutions of the minimal
surface equation with the same value and the same gradient at the origin. So the function
$u-h$ looks like a harmonic polynomial of degree at least $2$.

If the degree is $2$, one can find $v_0\in D\setminus\{(0,0)\}$ such that
$(u-h)(v_0)<0 $ and $(u-h)(-v_0)<0$. Then we have
\begin{gather*}
u(v_0)<h(v_0)<h(0,0)=u(v_0-v_0)\\
u((0,0)-v_0)<h(-v_0)<h(0,0)=u(0,0)
\end{gather*}
This contradicts $u(\cdot)\le u(\cdot-v_0)$ or $u(\cdot)\ge u(\cdot-v_0)$ on the
whole $D\cap (D+v_0)$.

If the degree is larger than $3$, the growth at the origin of $h$ implies that
there is a disk $D'$ centered at the origin included in $D$ such that $u<1$ in
$D'\setminus\{(0,0)\}$. So if $v_0\in D'\setminus\{(0,0)\}$ we have
$$
u(v_0)<u(0,0)=u(v_0-v_0) \text{ and } u((0,0)-v_0)=u(-v_0)<u(0,0)
$$
This gives also a contradiction $u(\cdot)\le u(\cdot-v_0)$ or $u(\cdot)\ge
u(\cdot-v_0)$ on the whole $D\cap (D+v_0)$.
\end{proof}

We notice that Remark~\ref{rem:isom} can be used to get strong transversality
with $T(c)$ for $c\ge 1$.

A consequence of Proposition~\ref{prop:transvers} is then the following result.
\begin{thC}
Let $\Sigma$ be a properly embedded minimal surface in $\boN$ of bounded
curvature. Then $\Sigma$ has finite topology.
\end{thC}
\begin{proof}
If $k_0$ is an upper bound of the norm of the second fundamental form of
$\Sigma$, Proposition~\ref{prop:transvers} gives a constant $\Lambda_0$. Now
$\boN$ can be decomposed as the union of a compact part $K$ and a finite number
of cusp-ends $C_i$ with $\Lambda(C_i)\le\Lambda_0$. Since $\Sigma$ is
transversal to the tori $T_i(c)$, $\Sigma$ has the same topology as
$\Sigma\cap\inter{K}$; so it has finite topology.
\end{proof}


\section{Existence of compact embedded minimal surfaces in $\boN$}
\label{sec:compact}


Producing minimal surfaces is often done by minimizing the area in a certain
class of surfaces. In order to ensure the compactness of our surface
in $\boN$, a min-max argument is more suitable in our proof of the following
existence result.

\begin{thA}
There exists a compact embedded minimal surface in any $\boN$.
\end{thA}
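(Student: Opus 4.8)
The plan is to produce $\Sigma$ by a min-max (Almgren--Pitts / Simon--Smith) argument on a suitably compactified model of $\boN$, and then to use the maximum principles of Section~\ref{sec:maxtrans} to push the resulting minimal surface out of the cusps, so that it becomes minimal for the genuine hyperbolic metric. First I would fix $t_0\in(0,1/2)$, let $\Lambda_0=\Lambda_0(t_0)$ be the constant of Proposition~\ref{prop:max2}, and use Remark~\ref{rem:isom} to replace each cusp end $C_i$ by a subend with $\Lambda(C_i)\le\Lambda_0$. Next I would modify the hyperbolic metric in the cusps as in Section~\ref{sec:maxtrans}: choose a smooth nondecreasing $\Psi$ with $\Psi(z)=z$ for $z\le1$ and $\Psi$ \emph{eventually constant} (for $z\ge c$, with $c$ large), keeping $|\Psi'|$ and $|\Psi\Psi''|$ bounded. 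Then $(\boN,g_\Psi)$ has bounded geometry, agrees with $g_\H$ on $\{z\le1\}$, and each cusp end becomes a flat product end $\T^2\times[0,\infty)$ beyond $\{z=c\}$; in particular the truncation $\boN_c=\{z\le c\text{ in each }C_i\}$ is a compact manifold whose boundary tori $T_i(c)$ are minimal in $g_\Psi$, and whose $g_\Psi$-areas can be made as small as we wish by taking $\Psi$ large on $[c,\infty)$.

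I would then run min-max on $\boN_c$, equivalently on the closed double $\widehat\boN$ of $\boN_c$ across its minimal boundary (exploiting the reflection symmetry to work with a closed manifold, at the cost of a $C^{1,1}$ metric along the doubling locus), over an appropriate class of $1$-parameter sweepouts. Two things must be checked: the width $w$ is finite, exhibited by any concrete sweepout of bounded area; and $w>0$, which follows from the topology of the compact core of $\boN$ via a Heegaard/sweepout argument or an Almgren-type isoperimetric lower bound. Min-max regularity then yields a nonempty, smooth, compact, embedded minimal surface $\Sigma$ of area $w$ and bounded genus. Since every boundary torus $T_i(c)$ (and every flat slice $\T^2\times\{t\}$ of a product end) has $g_\Psi$-area far below $w$, after discarding such topologically trivial flat components we may assume $\Sigma$ has a component meeting the interior of $\boN_c$; because two minimal surfaces tangent along a hypersurface coincide on a component, the strong maximum principle against the minimal tori $T_i(c)$ then forces $\Sigma$ to lie in the interior of $\boN_c$. (Alternatively, $z$ is harmonic along $\Sigma$ on each flat product end, so the ordinary maximum principle already confines $\Sigma$ to $\{z\le c\}$.)

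Finally I would descend to the hyperbolic region. As $\Sigma$ is a fixed smooth compact surface, Sard's theorem lets me choose $t_0$ in the admissible range so that $\Sigma$ is transverse to each torus $T_i(1-t_0)$ while still $\Lambda(C_i)\le\Lambda_0(t_0)$; the piece $\Sigma\cap C_i$ is then a compact embedded minimal surface in $(C_i,g_\Psi)$ with $\partial(\Sigma\cap C_i)\subset T_i(1-t_0)$, transverse to it, so Proposition~\ref{prop:max2} (Maximum Principle~II) gives $\Sigma\cap C_i\subset\{z\le1\}$ for each $i$. On $\{z\le1\}$ the metric $g_\Psi$ coincides with the hyperbolic metric $g_\H$, so $\Sigma$ is a compact embedded minimal surface for the original metric of $\boN$, lying in its compact part; this proves Theorem~A.

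The step I expect to be the main obstacle is the min-max construction itself in this non-compact (hence, after doubling, only $C^{1,1}$) setting: arranging a class of sweepouts for which the width is simultaneously positive and finite, and for which the limiting stationary varifold is a smooth embedded surface that neither degenerates nor runs off into the cusps. This is exactly where the bounded-geometry modification $g_\Psi$ and the two maximum principles of Section~\ref{sec:maxtrans} are indispensable: they furnish the barriers that keep the sweepouts, and their min-max limit, inside the compact part of $\boN$.
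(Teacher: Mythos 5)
Your overall strategy---compactify the cusps, run Almgren--Pitts min-max on the resulting closed manifold, then use the two maximum principles to push the surface back into the hyperbolic part $\{z\le 1\}$---is exactly the paper's. But your compactification (truncate at a minimal torus $T_i(c)$ and double across it) differs from the paper's (cap each cusp with a solid torus, keeping $\Psi'>0$ so that the tori $T_i(z)$ are strictly mean convex all the way to the core, with mean curvature blowing up there), and the difference is not cosmetic: it creates two problems that your arguments do not resolve. First, you never actually confine the min-max surface to one copy of $\boN_c$. The strong maximum principle against the minimal tori $T_i(c)$ only rules out one-sided tangential contact; a component of $\Sigma$ can perfectly well cross the doubling locus transversally and run from the core of one copy into the other, and the harmonicity of $z$ in the flat neck only constrains components \emph{entirely contained} in the neck. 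The paper's mechanism for preventing a component from travelling far up a cusp is quantitative: the monotonicity formula gives area at least $c_0$ per unit slab $T_i(k,k+1)$, hence area at least $c_0(L-1)$ for any component joining $T_i(1)$ to $T_i(L)$, and then one chooses $L>M_0/c_0$ where $M_0$ bounds the width. This linear-area-growth step is absent from your proposal, and without it (or a substitute) you cannot even assert that $\Sigma\cap C_i$ is a compact surface with boundary solely on $T_i(1-t_0)$, which is the hypothesis of Maximum Principle II that you invoke at the end.

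Second, by making $\Psi$ eventually constant you manufacture a one-parameter family of closed minimal (indeed totally geodesic) flat tori near the doubling locus, and the min-max limit is an integral varifold: it could be one of these small-area tori with high multiplicity, in which case ``discarding the flat components'' leaves you with nothing. Your remark that their area is ``far below $w$'' does not exclude this, precisely because of multiplicity; you would also need a lower bound on $w$ uniform in the truncation parameter, which you only sketch. The paper sidesteps both issues at once: in the capped-off manifold every torus $T_i(z)$, $z\ge 1$, is strictly mean convex, so $z_i$ has no interior minimum on a minimal surface and \emph{no} closed minimal component can lie in $\{z_i\ge1\}$ at all. If you want to keep the doubling picture, you would at minimum need to (i) import the monotonicity/linear-growth argument to forbid components spanning the long neck, and (ii) keep $\Psi'>0$ except exactly at the doubling locus and deal separately with the single minimal torus there---at which point you have essentially reconstructed the paper's proof with extra complications.
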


\begin{proof}
Let $C_1,\cdots,C_k$ be the cusp ends of $\boN$. Let $z_i$ be the
$z$-coordinates in $C_i$ and assume that $\Lambda(C_i)\le \Lambda_0$, for
$1\le i\le k$; $\Lambda_0$ the constant given by the maximum principle II.
This can always be realized by Remarks~\ref{rem:isom} and \ref{rem:isom2}.

Now we change the hyperbolic metric in each end $C_i$ as follows. Let
$\Psi:[1/2,\infty)\rightarrow \R$ satisfy $\Psi(z)=z$ for $1/2 \le z\le 1$,
$\Psi'(z)>0$ and $\lim_{z\rightarrow \infty} \Psi(z)=3/2$.

Let $L$ be large (to be specified later) and modify the metric $g_\Psi$ in
$[L,L+1]$ so the new metric gives a compactification of $C_i$ by removing
$\{z_i\ge L\}$ and attaching a solid torus to $T(L)$. The precise way to do this
will be explained below. With this new metric, for $L\le z\le L+1$ the mean
curvature of the tori $T_i(z)$ is increasing; going from $\Psi'(L)$ at $z=L$ to
$\infty$ as $z\rightarrow L+1$. $z=L+1$ corresponds to the core of the solid
torus. We do this in each cusp and get a compact manifold $\tilde\boN$ without
boundary endowed with a certain metric. We notice that the manifold does not
depend on $L$ but the metric does.

Now we can choose a Morse function $f$ on $\tilde\boN$ such that all the
tori $T_i(z)$, $1/2\le z\le L$, $1\le i\le k$, are level surfaces of $f$.

This Morse function $f$ defines a sweep-out of the manifold $\tilde \boN$ and 
$$
M_0=\max_{t\in\R}\boH^2(f^{-1}(t))
$$
essentially does not depend on $L$ (in fact it can decrease when $L$ increases)
($\boH^2$ is the $2$-dimensional Hausdorff measure).

Almgren-Pitts min-max theory applies to this sweep-out and gives a compact embedded
minimal surface $\Sigma$ in $\tilde\boN$ whose area is at most $M_0$ (see
theorem 1.6 in \cite{CoDeL}). Let us see now that $\Sigma$ actually lies in the
hyperbolic part of $\tilde\boN$ so in $\boN$.

Since $\Psi\rightarrow 3/2$, the metric on $T_i(k,k+1)$ is uniformly controlled
and close to being flat. As a consequence of the monotonicity formula for minimal
surfaces (see Theorem 17.6 in \cite{Sim}), if $\Sigma\cap
T_i(k+1/2)\neq\emptyset$ ($1\le k\le L-1$), the area of $\Sigma\cap T_i(k,k+1)$
is at least $c_0>0$. The constant $c_0$ only depends on the ambient sectional
curvature bound and the $v_j^i$'s (the vectors in the end $C_i$).

This monotonicity formula gives at least linear growth for $\Sigma$. More
precisely, if a connected component of $\Sigma$ intersect $T_i(1)$ and $T_i(L)$
is has area at least $c_0(L-1)$. So by choosing, $L$ larger than $M_0/c_0$ there
is no component of $\Sigma$ meeting both $T_i(1)$ and $T_i(L)$.

Also, no connected component lies entirely in $\{z_i\ge 1\}$. Indeed, the
$z_i$ would have a minimum on the component which is impossible by the classical maximum
principle and the sign of the mean curvature on $T_i(z)$. Thus $\Sigma$ stays
out of $\{z_i\ge L\}$. Hence by the maximum principle II, $\Sigma$ does not
enter in any $\{z_i\ge 1\}$ which completes the proof.

Let us now give the definition of the new metric on $[L,L+1]$. The tori $T_i(c)$
are the quotient of $\R^2$ by $v_1^i,v_2^i$ so they can be parametrized by
$u\frac{v_1^i}{2\pi}+v\frac{v_2^i}{2\pi}$ where $(u,v)\in\S^1\times\S^1$. With
this parametrization, the metric $g_\Psi$ on $C_i$ is then 
\begin{multline*}
\frac1{4\pi^2\Psi(z_i)^2}(|v_1|^2\dd u^2+2(v_1,v_2)\dd u\dd v+|v_2|^2\dd v^2+d z_i^2)=\\
\frac1{\Psi(z_i)^2}(a^2\dd u^2+2b\dd u\dd v+c^2\dd v^2+dz_i^2)
\end{multline*}
Let $\phi$ be a smooth non increasing function on $[L,L+1]$ such that $\phi(z)=1$
near $L$ and $\phi(z)=((L+1)-z)/a$ near $L+1$. We then change the metric on
$\{L\le z_i\le L+1\}$ by
\begin{equation}\label{eq:metric}
\frac1{\Psi^2(z_i)}(\dd z_i^2+a^2\phi(z_i)^2\dd u^2+2b\phi(z_i)\dd u\dd v+c^2\dd v^2)
\end{equation}
Actually, this change consists in cutting $\{z_i\ge L\}$ from the cusp end $C_i$
and gluing a solid torus along $T(L)$. To see this, let $D$ be the unit disk with
its polar coordinates $(r,\theta)\in[0,1]\times\S^1$ and let us define the map
$h:D\times \S^1\rightarrow \S^1\times \S^1\times[L,L+1]$ by
$(r,\theta,v)\mapsto(\theta,v,L+1-r)$. The induced metric by $h$ from the one in
\eqref{eq:metric} for $r$ near $0$ is
\begin{multline*}
\frac1{\Psi^2(L+1-r)}(\dd r^2+a^2\frac{r^2}{a^2}\dd \theta^2+2b\frac r a\dd
\theta\dd v+c^2\dd v^2)\\=\frac1{\Psi^2(L+1-r)}(\dd r^2+r^2\dd \theta^2+2\frac b
a r\dd \theta\dd v+c^2\dd v^2)
\end{multline*}
This is a well defined metric on the solid torus $D\times \S^1$.

With this new metric, the tori $T_i(c)=\{z_i=c\}$ ($c\in [L,L+1)$) have constant
mean curvature $\Psi'(c)-\frac{\phi'(c)}{2\phi(c)}\Psi(c)>0 $ with respect to
$\Psi(z)\partial_z$.
\end{proof}

A minimization argument can be done under some hypotheses to produce compact
minimal surfaces.

\begin{thB}
Let $S$ be a closed orientable embedded surface in $\boN$ which is not a
$2$-sphere or a torus. If $S$ is incompressible and non-separating, then $S$ is
isotopic to a least area embedded minimal surface.
\end{thB}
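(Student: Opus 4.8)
The plan is to realize the isotopy class of $S$ by a least area surface via a classical minimization argument, and then use the maximum principles of Section~\ref{sec:maxtrans} to confine the minimizer to a compact part of $\boN$, so that it is indeed compact. The starting point is the Schoen--Yau / Meeks--Simon--Yau theory: in a compact Riemannian $3$-manifold, an incompressible embedded surface $S$ that is not a sphere is isotopic to an embedded least area surface in its isotopy class (for the non-separating, orientable case one minimizes area in the isotopy class of $S$ and the limit is embedded and incompressible, with the genus not increasing; non-separation is preserved since it is detected by homology). The difficulty is that $\boN$ is non-compact, so one cannot directly apply this; minimizing sequences could escape to infinity in the cusps.

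First I would fix, using Remarks~\ref{rem:isom} and \ref{rem:isom2}, a decomposition $\boN = K \cup C_1 \cup \dots \cup C_k$ where $K$ is compact and each cusp end $C_i$ satisfies $\Lambda(C_i) \le \Lambda_0$, the constant from Maximum principle II (Proposition~\ref{prop:max2}), applied with some fixed $t_0 \in (0,1/2)$. After an isotopy I may assume $S \subset \inter{K}$ and in fact $S \subset \{z_i \le 1 - t_0\}$ in every cusp, and that $S$ is transverse to all the tori $T_i(c)$. Next, rather than minimizing in all of $\boN$, I would minimize in a large compact region: truncate each cusp at a height $L$ (as in the proof of Theorem~A) and cap off with a solid torus to get a closed manifold $\tilde\boN$ with a metric agreeing with the hyperbolic one on $\{z_i \le L\}$ and with the tori $T_i(z)$ having positive mean curvature (pointing into the cusp) for $1 \le z \le L+1$. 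In $\tilde\boN$ the surface $S$ is still incompressible and non-separating (it remains non-separating because $[S] \ne 0$ in $H_2$, and incompressible since capping by solid tori only kills the torus subgroups already at infinity, which $\pi_1(S)$ does not meet by incompressibility of $S$ in $\boN$). Apply the compact theory in $\tilde\boN$ to get an embedded least area surface $\Sigma$ isotopic to $S$ there.

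Then I would show $\Sigma$ lies in the hyperbolic part. First, since $\Sigma$ is least area in $\tilde\boN$ and $S$ (which is a competitor) lies in $\{z_i \le 1-t_0\}$, we get an area bound $\boH^2(\Sigma) \le \boH^2(S) =: M_0$ independent of $L$. If $L$ is chosen large (larger than $M_0/c_0$ for the monotonicity constant $c_0$ as in Theorem~A's proof), no component of $\Sigma$ can reach from $T_i(1)$ out to $T_i(L)$; and no component can be entirely contained in $\{z_i \ge 1\}$, since the $z_i$-function would attain an interior minimum there, contradicting the maximum principle and the sign of the mean curvature of $T_i(z)$. Hence $\Sigma$ avoids $\{z_i \ge L\}$, so it is a minimal surface in the hyperbolic manifold $\boN$ (it misses the capping tori). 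Since $\Sigma$ is least area, it is stable, hence has bounded curvature \cite{RoSoTo}; being compact this is automatic, but the key point is that by Maximum principle II (Proposition~\ref{prop:max2}), because $\Lambda(C_i) \le \Lambda_0$, any such compact embedded minimal surface with boundary in $T_i(1-t_0)$ — or here simply avoiding the caps — must satisfy $\Sigma \subset \{z_i \le 1\}$, provided $\Sigma$ is transverse to $T_i(1-t_0)$. (Transversality for the stable minimizer follows from the curvature bound and Proposition~\ref{prop:transvers} once $\Lambda_0$ is taken small enough.) Therefore $\Sigma$ is a compact embedded minimal surface in $\boN$ isotopic to $S$.

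The main obstacle I expect is the bookkeeping needed to ensure that the isotopy constructed inside $\tilde\boN$ and the confinement argument are compatible — i.e., that the least area surface $\Sigma$ in $\tilde\boN$ is isotopic to $S$ \emph{within $\boN$} and not just within $\tilde\boN$. This is fine because the isotopy from the compact minimization theory can be taken supported away from the capping solid tori once we know $\Sigma$ (and the whole isotopy, by a maximum principle applied to the mean-convex foliations) stays in $\{z_i < L\}$; one should also check that the Meeks--Simon--Yau minimization does not produce lower-genus pieces or that the non-separating hypothesis survives — both are standard: the homology class $[S] \neq 0$ forces the minimizer to carry that class, and incompressibility plus the theory keep it connected of the right topological type. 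A secondary technical point is justifying that a least area representative exists in the isotopy class (not merely in the homology class); here one invokes the refinement of Meeks--Simon--Yau for incompressible surfaces, valid since $S$ is not a $2$-sphere.
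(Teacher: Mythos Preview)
Your overall strategy---compactify, minimize in the isotopy class, then use the maximum principles of Section~\ref{sec:maxtrans} to confine the minimizer to the hyperbolic part---is the right one, and it is essentially what the paper does. But you compactify by \emph{Dehn filling} (as in Theorem~A), whereas the paper truncates each cusp at height $2n$ and modifies the metric so that the boundary tori $T_i(2n)$ are \emph{minimal}, then minimizes in the manifold with boundary $\boN(2n)$ using the Hass--Scott theory. This difference matters, and your version has two genuine gaps.

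First, and most seriously, you assert that $S$ remains incompressible in the filled manifold $\tilde\boN$ because ``capping by solid tori only kills the torus subgroups already at infinity, which $\pi_1(S)$ does not meet''. This is not a valid argument: Dehn filling kills the \emph{normal closure} of the filling meridians, and this normal closure can very well meet $\pi_1(S)$ even when $\pi_1(S)$ contains no parabolic elements. It is a standard phenomenon that closed incompressible surfaces become compressible after certain fillings; only for all but finitely many slopes on each cusp is incompressibility guaranteed to persist, and the specific slope dictated by the metric construction of Theorem~A need not be among the good ones. Without incompressibility in $\tilde\boN$, the Meeks--Simon--Yau/Freedman--Hass--Scott machinery does not give you a minimizer isotopic to $S$. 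The paper's approach sidesteps this completely: since $\boN(2n)$ is a submanifold of $\boN$, incompressibility of $S$ and the isotopy class are automatically preserved.

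Second, you do not actually prove that your $\Sigma$ is \emph{least area in $\boN$ for the hyperbolic metric}. You obtain a least area representative in $\tilde\boN$ for the modified metric $g_\Psi$ (which differs from $g_\H$ on $\{z_i\ge 1\}$), with one fixed $L$. Even after confining $\Sigma$ to $\{z_i\le 1\}$, a competitor $S'$ in $\boN$ isotopic to $S$ may enter $\{z_i\ge 1\}$ or even $\{z_i\ge L\}$, where it is either measured in the wrong metric or is not a competitor in $\tilde\boN$ at all. The paper handles this by taking a \emph{sequence} $\boN(2n)$, with the metric hyperbolic on all of $\boN(n)$; since every $\Sigma_n$ lands in $\boN(1)$, they are all competitors for one another in every $\boN(2m)$, hence have equal area, and any compact competitor $S'\subset\boN$ lies in some $\boN(n)$, giving $\text{area}_\H(\Sigma_1)\le \text{area}_\H(S')$.
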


\begin{proof}
Let $C_1,\cdots,C_k$ be the cusp ends of $\boN$. Let $z_i$ be the
$z$-coordinates in $C_i$ such that the surface $S$ does not enter in $\{z_i\ge
1\}$. We assume that $\Lambda(C_i)\le \Lambda_0$, for $1\le i\le k$; $\Lambda_0$
the constant given by the maximum principle II for the function $\Psi$ below.

Let $\Psi:\R_+^*\rightarrow \R$ be a smooth increasing function such that
$\Psi(z)=z$ on $(0,1]$ and $\Psi'(2)=0$.

For each $a\ge 1$, let $\boN(a)$ be $\boN$ with each cusp end truncated at
$z_i=a$; \textit{i.e.} $\boN(a)=\boN\setminus\cup_{1\le i\le k}\{z_i>a\}$. We
remark that the $\boN(a)$ are all diffeomorphic to each other. 

Let $n$ be an integer. In each cusp end $C_i$, we change the metric on
$\boN(2n)$ by using a function $\Psi_n:[1/2,2n]\rightarrow \R;
z\mapsto n\Psi(\frac z{n})$. So $\Psi_n(z)=z$ on $[1/2,n]$ and
$\Psi'_n(2n)=0$; the torus $T_j(2n)$ minimal. We notice that the metric on
$\boN(n)$ is not modified.

Let us minimize the area in the isotopy class of $S$ in the manifold with
minimal boundary $\boN(2n)$. By Theorem~5.1 and remarks before Theorem~6.12 in
\cite{HaSc}, there is a least area surface $\Sigma_n$ in $\boN(2n)$ which is
isotopic to $S$. Theorem~5.1 in \cite{HaSc} can be applied because $\boN(2n)$ is
$P^2$-irreducible ($\boN(2n)$ is orientable and its universal cover is
diffeomorphic to $\R^3$). Moreover the minimization process does not produce a
non-orientable surface since, in that case, $S$ would be isotopic to the
boundary of the tubular neighborhood of it, hence $S$ would separate $\boN$. Finally $\Sigma_n$ is not one connected component of $\partial\boN(2n)$ since $S$ is not a torus.

In $\boN(2n)$, $(\{z=c\})_{c\in[1,2n]}$ is a mean convex foliation so
$\Sigma_n\cap\boN(1)\neq \emptyset$. By the maximum principle II, it implies
that $\Sigma_n\subset \boN(1)$ so in a piece of $\boN$ where the metric never
changes. \textit{A priori}, the surfaces $\Sigma_n$ could be different. But,
since they all lie in $\boN(1)$, they all appear in the minimization process in
$\boN(2)$ so they all have the same area. So $\Sigma_1$ is a least area surface
in the isotopy class of $S$ in $\boN$ with the hyperbolic metric.
\end{proof}

\begin{remarq}\label{rem:estim}
We can notice that there is a uniform lower bound for the area of minimal
surfaces in manifolds $\boN$. The point is that the thick part of such a
manifold $\boN$ is not empty. So at each point in the thick part there is an
embedded geodesic ball of radius $\eps_3/2$ centered at that point where
$\eps_3$ is the Margulis constant of hyperbolic $3$-manifolds. 

Each connected component of the thin part is either a hyperbolic cusp or the
tubular neighborhood of a closed geodesic. So it is foliated by mean convex
surfaces and a minimal surface $\Sigma$ can not be included in such a component.
So there is $x\in\Sigma$ in the thick part. Thus we can apply a monotonicity
formula (see \cite{Sim}) to conclude that the area of the part of $\Sigma$
inside the geodesic ball of radius $\eps_3/2$ and center $x$ is larger than a
constant $c_3>0$ that depends only on the geometry of the hyperbolic $\eps_3/2$
ball.

When the compact minimal surface $\Sigma$ is stable, we can be more precise (see
\cite{Has} where Hass attributes this estimates to Uhlenbeck). Applying the
stability inequality to the constant function $1$. We get that 
$$
\int_\Sigma -(\Ric(N,N)+|A|^2)\ge 0.
$$
Since $|A|^2=-2(K_\Sigma+1)$, the Gauss-Bonnet formula gives
$$
Area(\Sigma)\ge -\frac12\int_\Sigma K_\Sigma=-\frac12 \chi(\Sigma)=2\pi(g-1)
$$
with $g$ the genus of $\Sigma$. Morover, by Gauss formula $K_\Sigma\le -1$, so
the Gauss-Bonnet formula gives $Area(\Sigma)\le 4\pi(g-1)$.
\end{remarq}


\section{Existence of non compact embedded minimal surfaces in $\boN$}
\label{sec:noncompact}


In \cite{HaRuWa}, Hass, Rubinstein and Wang construct proper minimal surfaces in
manifolds $\boN$ by a minimization argument in homotopy classes. In \cite{Rub},
Ruberman constructs least area surfaces in the isotopy class. Here we make use
of results in Section \ref{sec:maxtrans} to give a different approach on the
proof of this second result.

First we remark that, in manifolds $\boN$, there is always a "Seifert" surface.
$\boN$ is topologically the interior of a compact manifold $\barre{\boN}$ with
tori boundary components and each boundary torus is incompressible. By Lemma 6.8
in \cite{Hem}, there is a compact embedded surface $\barre{S}$ in $\barre{\boN}$
with non empty boundary which is incompressible and $2$-sided; moreover it is
non-separating. Then $S=\barre{S}\cap \boN$ is a properly embedded smooth
surface in $\boN$, $S$ incompressible, of finite topology, non compact,
non-separating and $2$-sided.

The result is the following statement.

\begin{thm}\label{th:isotopy}
Let $S$ be a properly embedded, non compact, finite topology, incompressible,
non separating surface in $\boN$. Then $S$ is isotopic to a least area embedded
minimal surface.
\end{thm}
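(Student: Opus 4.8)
The plan is to run a compactness argument that promotes the non-compact minimization problem to a sequence of compact minimization problems on the truncated manifolds $\boN(a)$, exactly as in the proof of Theorem B, and then to control the behaviour of the minimizers in the cusp ends using the results of Section~\ref{sec:maxtrans}. First, I would fix the cusp ends $C_1,\dots,C_k$ of $\boN$, and (using Remarks~\ref{rem:isom} and \ref{rem:isom2}) assume each $\Lambda(C_i)$ is small enough that the maximum principle~II and the transversality proposition apply for a suitable choice of modifying functions $\Psi$. As in Theorem~B, I would choose $\Psi_n$ so that $T_i(2n)$ becomes minimal while the metric is unchanged on $\boN(n)$, and for each $n$ minimize area in the isotopy class of $S$ \emph{among surfaces with boundary on $\partial\boN(2n)$ sliding freely in the boundary tori}. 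The relevant existence theorem is again that of Hass--Scott (cf.\ \cite{HaSc}): since $\boN(2n)$ is $P^2$-irreducible and $S$ is incompressible, $\partial$-incompressible (its boundary curves are essential in the boundary tori), non-separating and $2$-sided, there is a least-area embedded surface $\Sigma_n$ isotopic to $S$; orientability and non-separation are preserved as in Theorem~B.

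The next, and central, step is to show that the minimizers $\Sigma_n$ do not escape to infinity: I would prove that $\Sigma_n\cap\boN(1)\neq\emptyset$ and then invoke the maximum principle~II in each cusp end to conclude $\Sigma_n\subset\boN(1)$. The non-emptiness follows because a non-separating incompressible surface cannot be isotoped entirely into a cusp region $\{z_i>1\}$ (each $T_i(c)$ is separating in $\boN$ and, more to the point, $S$ has non-trivial homology that is detected in the compact core), so $\Sigma_n$, being isotopic to $S$, must meet $\boN(1)$; alternatively one controls the area growth with the monotonicity formula as in the proof of Theorem~A to force a component into the compact part. Once $\Sigma_n\subset\boN(1)$, the metric it sees is the fixed hyperbolic metric, so all the $\Sigma_n$ compete in the \emph{same} minimization problem in $\boN(2)$ and hence have equal area; thus $\Sigma:=\Sigma_1$ is a least-area surface in the isotopy class of $S$ for the hyperbolic metric on $\boN$, and it has bounded curvature (being stable, by the estimates of \cite{RoSoTo} cited in Section~\ref{sec:maxtrans}), hence finite topology by Theorem~C, matching that of $S$.

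The last point to settle is that $\Sigma$ is genuinely isotopic to $S$ in $\boN$ and not merely in $\boN(1)$, and that its ends are controlled: here I would use the transversality Proposition~\ref{prop:transvers}, which shows $\Sigma$ meets each torus $T_i(c)$ at a definite angle, so that $\Sigma\cap\{z_i\ge 1\}$ consists of a fixed finite number of annular ends which, by the Finite Total Curvature Theorem~\ref{th:ftc}, are asymptotic to totally geodesic $2$-cusps; combining this with the isotopy inside $\boN(1)$ (and the product structure of the cusp ends) yields a global isotopy from $S$ to $\Sigma$ in $\boN$.

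\textbf{Main obstacle.} The delicate part is the boundary behaviour of the Hass--Scott minimizers: one must set up the free-boundary (or fixed-boundary-curve) minimization in $\boN(2n)$ so that the minimizer is embedded, isotopic to $S$, and does not degenerate into boundary-parallel pieces or lower the area by absorbing an unknotted handle — and then argue that its boundary curves stay of bounded length so that the maximum principle~II can be applied uniformly in $n$. Controlling this uniformly, and ruling out the minimizer running off to infinity along the cusps, is where the estimates of Section~\ref{sec:maxtrans} do the real work.
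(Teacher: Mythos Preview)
Your central step fails: you cannot conclude $\Sigma_n\subset\boN(1)$. The surface $S$ is \emph{non-compact}, so $S\cap\boN(2n)$ has nonempty boundary on the tori $T_i(2n)$, and hence so does your minimizer $\Sigma_n$ (whether you fix the boundary or let it slide freely). Thus $\Sigma_n$ necessarily reaches all the way out to $z_i=2n$ and is never contained in $\boN(1)$. The maximum principle~II (Proposition~\ref{prop:max2}) is stated for compact minimal surfaces with boundary on the \emph{inner} torus $T(1-t_0)$; it prevents a surface from going \emph{deeper} into the cusp than its boundary allows, not from extending outward to its boundary. Consequently the Theorem~B trick ``all $\Sigma_n$ sit in a fixed compact piece with the hyperbolic metric, hence have equal area, hence $\Sigma_1$ is the desired minimizer'' simply does not apply in the non-compact case.

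The paper's proof is genuinely different for this reason. It fixes the boundary $\partial\Sigma(4n)=\partial S(4n)$ (after isotoping $S$ so its ends are totally geodesic cusps, giving a uniform area bound), and then analyzes the portion of $\Sigma(4n)$ inside the cusps rather than trying to exclude it. Using stability curvature bounds together with the transversality Proposition~\ref{prop:transvers}, it proves three claims showing that each component of $\Sigma(4n)\cap\{n\le z_i\le 4n\}$ is an annulus with one boundary circle on each of $T_i(n)$ and $T_i(4n)$, and that these annuli are graphs over the totally geodesic model beyond some fixed height $a_0$. Only then does one have uniform area and curvature bounds on $\Sigma(4n)\cap\boN(n)$ that allow passage to a subsequential limit $\Sigma$; the non-separating hypothesis is used at the end to rule out the double-graph (sheet-collapsing) alternative in the convergence. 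Your proposal does not contain this annular-structure analysis, and without it there is no mechanism to produce the non-compact limit.
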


\begin{proof}
$S$ has a finite number of annular ends $A_1,\cdots, A_p$, each one being
included in one cusp end $C_i$ of $\boN$. Since $A_j$ is incompressible in
$C_i$, we can isotope $S$ so that each annular end $A_j$ is totally geodesic in
the end $C_i$ it enters. We still call $S$ this new surface and we notice that its
area is finite for the hyperbolic metric.

Let $\Psi:\R_+^*\rightarrow \R$ be a smooth increasing function such that
$\Psi(z)=z$ on $(0,1]$ and
$\Psi'(4/3)=0$. Let $\Lambda_0$ be the constant given by the Maximum principle
II and the transversality lemma (Propositions~\ref{prop:max2} and
\ref{prop:transvers}). Assume the ends of $\boN$ are chosen so that
$\Lambda(C_i)\le \Lambda_0$ for each end $C_i$.

As in the proof of Theorem B, we denote $\boN(a)=\boN\setminus\cup_{1\le i\le
k}\{z_i>a\}$. We remark that $\boN(a)$ is diffeomorphic to $\barre\boN$.

Let $n$ be a large integer. In each cusp end $C_i$, we change the metric on
$\boN(4n)$ by using a function $\Psi_n:[1/2,4n]\rightarrow \R;
z\mapsto 3n\Psi(\frac z{3n})$. So $\Psi_n(z)=z$ on $[1/2,3n]$ and
$\Psi'_n(4n)=0$; the torus $T_j(4n)$ minimal. We notice that the metric on
$\boN(3n)$ is not modified.

Let $S(4n)=S\cap \boN(4n)$; the area of $S(4n)$ is bounded by some constant $A$
independent of $n$. By Theorem~6.12 in \cite{HaSc}, there is a least area
surface $\Sigma(4n)$ in $\boN(4n)$, isotopic to $S(4n)$ and
$\partial\Sigma(4n)=\partial S(4n)$. We remark that $\Sigma(4n)$ is stable so
has bounded curvature away from its boundary (independent of $n$) (see
\cite{RoSoTo}).

In each cusp end $C_i$, Proposition~\ref{prop:transvers} implies $\Sigma(4n)$ is
transverse to the tori $T_1(a)$, $1\le a\le 2n$ (see Remark~\ref{rem:isom}, in
order to apply Proposition~\ref{prop:transvers}). So each intersection
$\Sigma(4n)\cap T_1(a)$ is composed of the same number of Jordan curves for
$1\le a \le 2n$. The next claims prove that this number is equal to the
number of boundary components of $\Sigma(4n)$ on $T_1(4n)$.

\begin{claim}\label{cl:1}
Let $\Ome$ be a domain in $\Sigma(4n)$ with boundary in $T_1(a)$ ($1\le
a\le n$). Then $\Ome$ does not enter in any $\{z_i\ge a\}$.
\end{claim}
\begin{proof}
If $\Sigma$ enters in one $\{z_i\ge a\}$, by transversality, it enters in
$\{z_i\ge 2n\}$. So the function $z_i$ will have a maximum larger than $2n$
which is impossible by Proposition~\ref{prop:max2} (see also
Remark~\ref{rem:isom}).
\end{proof}

\begin{claim}\label{cl:2}
Let $\gamma$ be a connected component of $\Sigma(4n)\cap T_1(a)$ ($1\le a\le
n$). Then $\gamma$ is not trivial in $\pi_1(T_1(a))$.
\end{claim}

\begin{proof}
Assume that $\gamma$ is trivial in $\pi_1(T_1(a))$. Since $\Sigma(4n)$ is
incompressible, $\gamma$ bounds a disk $\Delta$ in $\Sigma(4n)$. By
Claim~\ref{cl:1}, $\Delta$ stays in $\boN(a)$ where the metric is still
hyperbolic. So we can lift $\Delta$ to a minimal disk $\Delta'$ in
$\R^2\times\R_+$ (with the hyperbolic metric) with boundary in $z_1=a$ and
entirely included in $\{z_1\le a\}$. This is impossible by the maximum principle
since $\{z_1=s\}$ has constant mean curvature one.
\end{proof}

\begin{claim}
Let $\Sigma$ be a connected component of $\Sigma(4n)\cap\{n\le z_1\le 4n\}$.
Then $\Sigma$ is an annulus with one boundary component in $T_1(n)$ and one
in $T_1(4n)$.
\end{claim}

\begin{proof}
Let us first prove that the inclusion map of $\Sigma$ in $\{n\le z_1\le 4n\}$ is
$\pi_1$-injective. So let $\gamma$ be a loop in $\Sigma$ which bounds a disk in
$\{n\le z_1\le 4n\}$. Since $\Sigma(4n)$ is incompressible, there is a disk
$\Delta$ in $\Sigma(4n)$ bounded by $\gamma$. If $\Delta$ is in $\Sigma$, we
are done. If not, there is a subdisk $\Delta'$ of $\Delta$ with boundary in
$T_1(n)$; but this is impossible by Claim~\ref{cl:2}. So the inclusion map is
$\pi_1$-injective. We notice that $\pi_1(\{n\le z_1\le 4n\})$ is
Abelian, so $\pi_1(\Sigma)$ is Abelian. This implies that $\Sigma$ is topologically a sphere,
a disk, an annulus or a torus. The sphere and the torus are not possible since
$\Sigma$ has a non-empty boundary. Claim~\ref{cl:1} implies that
$\Sigma$ must have a boundary component on $T_1(4n)$. If the whole boundary of
$\Sigma$ is in $T_1(4n)$, the $z_1$ function admits a minimum on $\Sigma$
that is impossible by the maximum principle since the $T_1(c)$ have positive
mean curvature. So $\Sigma$ is an annulus with one boundary component in
$T_1(n)$ and one in $T_1(4n)$.
\end{proof}

With these claims, we have thus proved that $\Sigma(4n)\cap \boN(n)$ is isotopic
to $S\cap\boN(n)$ (here, we allow the boundary to move). We also notice that
because of the curvature estimate on $\Sigma(4n)$ and the transversality
estimate given by Proposition~\ref{prop:transvers}, the intersection curves
$\Sigma(4n)\cap T_1(a)$ ($1\le a\le n$) have bounded curvature. So they have a
well controlled geometry far in the cusp. More precisely, there is $a_0$ such
that $\Sigma(4n)\cap \{a_0\le z_1\le n\}$ is a graph over $S\cap\{a_0\le z_1\le
n\}$. So the sequence $\Sigma(4n)\cap \boN(n)$
is a sequence of surfaces with uniformly bounded area and curvature whose
behavior in the cusps is well controlled. Thus a subsequence converges to
a minimal surface $\Sigma$. This convergence says that $\Sigma(4n)\cap \boN(k)$
can be written as a graph or a double graph over $\Sigma\cap\boN(k)$. In the
first case, the surface $\Sigma$ is then isotopic to $S$. In the second case,
$\Sigma(4n)\cap\boN(k)$ is isotopic to the boundary of a tubular neighborhood of
$\Sigma\cap \boN(k)$ in $\boN(k)$; this implies that $\Sigma(4n)\cap\boN(k)$ is
a separating surface which is impossible by the properties of $S$.
\end{proof}

We notice that the area estimate given in Remark~\ref{rem:estim} are also true
for non compact minimal surface. Indeed, because of the asymptotic behaviour of
a stable minimal surface, the constant function $1$ can be used as a test
function even in the non compact case.

\section{Some examples}
\label{sec:examples}

In this section, we give some "explicit" examples that illustrate the above
theorems.

H. Schwarz and A. Novius constructed periodic minimal surfaces in $\R^3$ by
constructing minimal surfaces in a cube possessing the symmetries of the cube.
These surfaces then extend to $\R^3$ by symmetry in the faces. 

K. Polthier constructed periodic embedded minimal surfaces in $\H^3$ in an
analogous manner. Let $P$ be a finite side polyhedron of $\H^3$ such that
symmetry in the faces of $P$ tessellate $\H^3$. If $\Sigma_0$ is an embedded
minimal surface in $P$, meeting the faces of $P$ orthogonally and with the same
symmetry as $P$. Then $\Sigma$ extends to an embedded minimal surface in $\H^3$
by symmetry in the faces. Polthier makes this work for many polyhedron $P$;
\textit{e.g.} for all the regular ideal Platonic solids whose vertices are on
the spheres at infinity. Among these examples, one can obtain examples in
complete hyperbolic $3$-manifolds of finite volume.

We first describe how this technique yields an embedded genus $3$ compact
minimal surface in the figure eight knot complement $\boN$. 

Let $T$ be an ideal regular tetrahedron of $\H^3$; all the dihedral angles are
$2\pi/3$. In the Klein model of $\H^3$ (the unit ball of $\R^3$), $T$ is a
regular Euclidean tetrahedron with its four vertices on the unit sphere. Label
the faces of $T$ and two vertices of $T$, as in Figure~\ref{fig:mani}. Then identify face $A$
with face $B$ by a rotation by $2\pi/3$ about $v$ , and identify $D$ with $C$ by
a rotation by $2\pi/3$ about $w$.

\begin{figure}[h]
\subfloat[\label{fig:mani}]{\resizebox{0.5\linewidth}{!}{\input{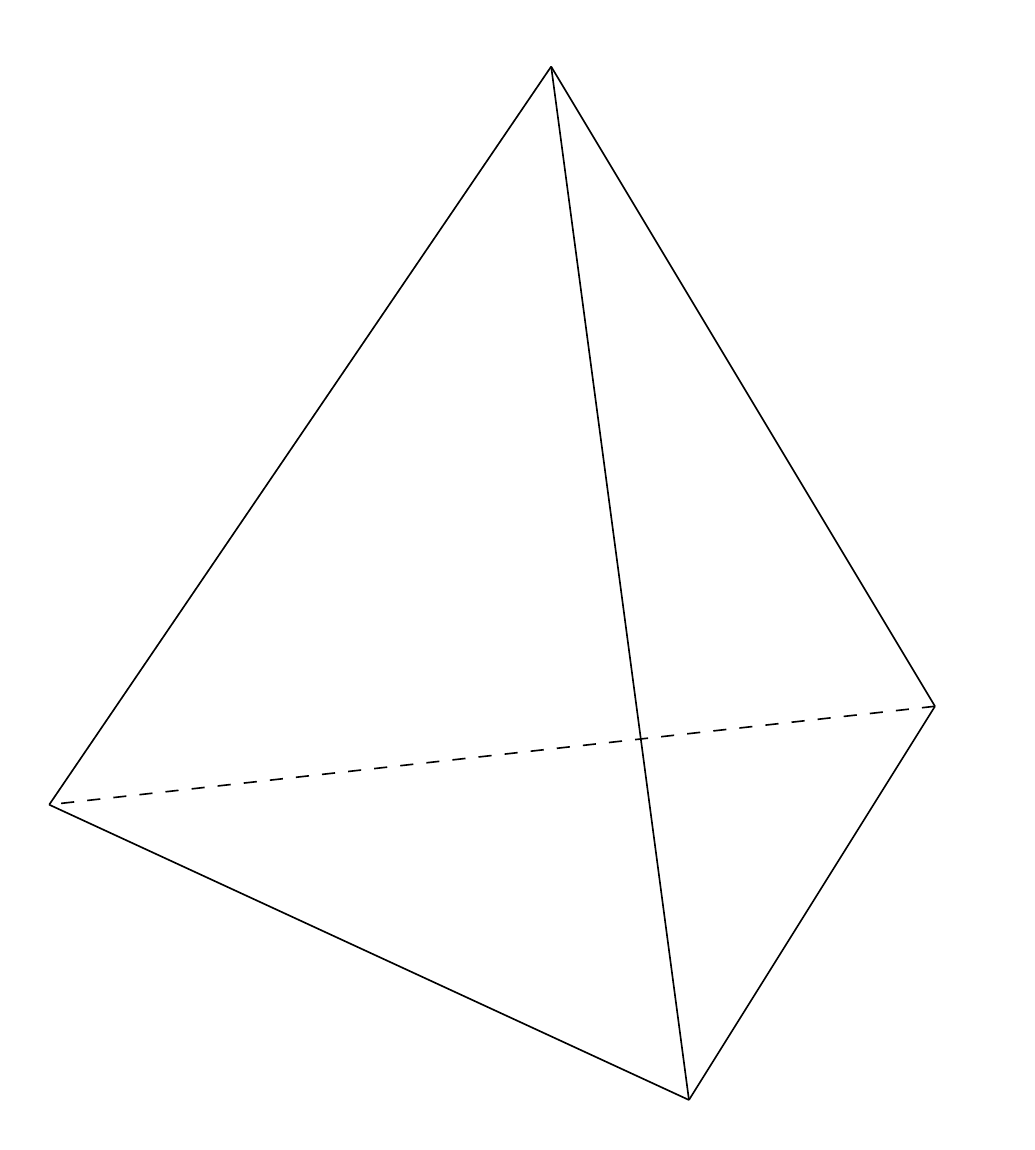_t}}}
\subfloat[\label{fig:tessel}]{\resizebox{0.5\linewidth}{!}{\input{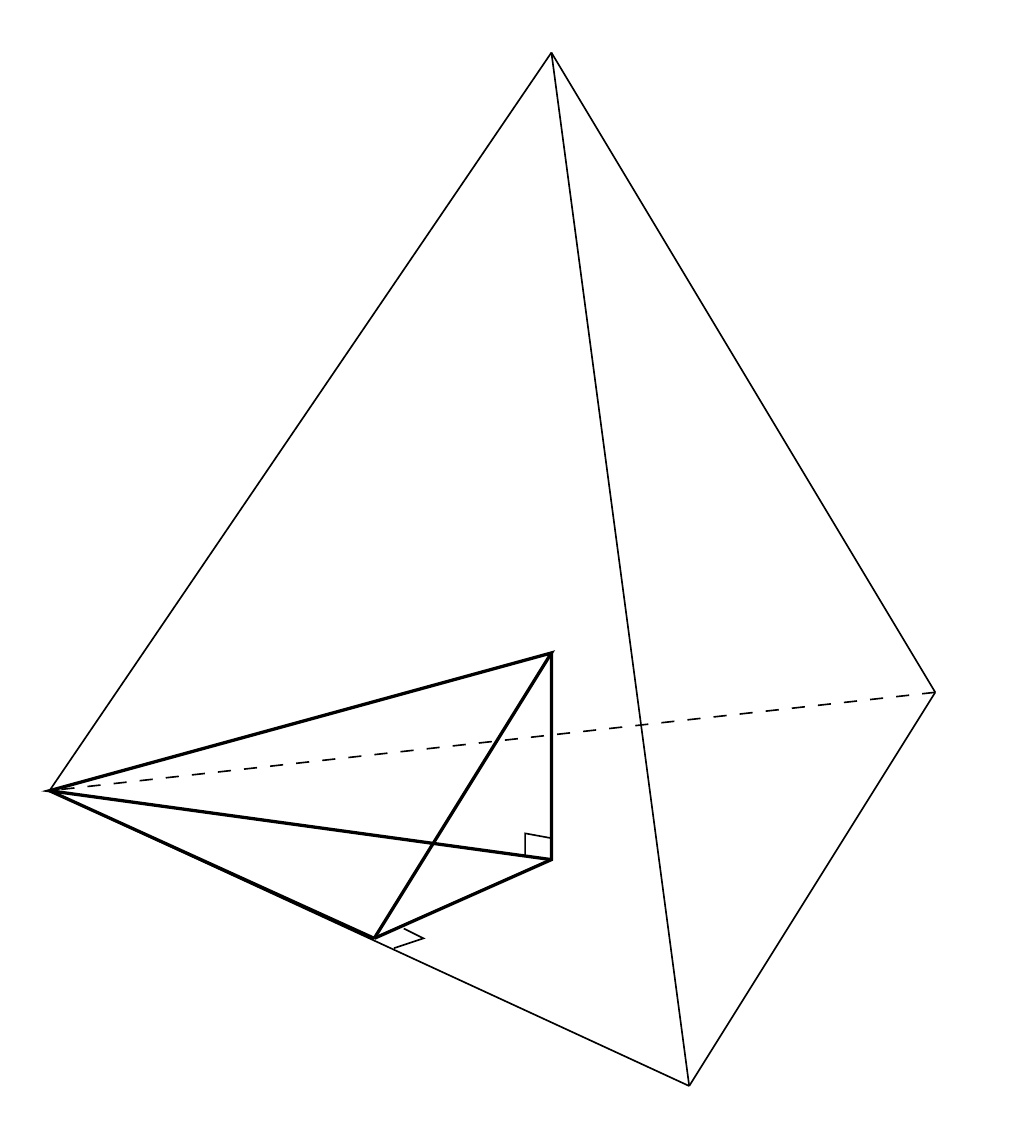_t}}}\\
\begin{center}
\subfloat[\label{fig:polthier}]{\resizebox{0.4\linewidth}{!}{\input{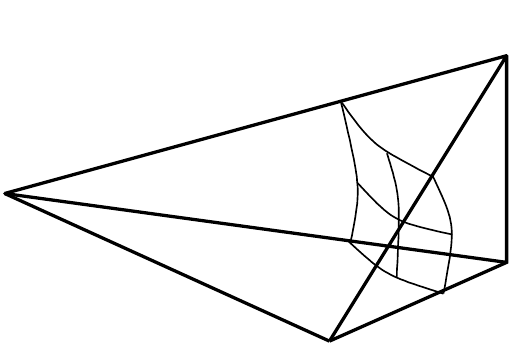_t}}}
\end{center}
\caption{a minimal surface in the Giesekind manifold}
\end{figure}

The quotient of $T$ by these face matchings, produces a non orientable
hyperbolic $3$-manifold of finite volume. There is one vertex and its link is a
Klein bottle. This manifold $\boN$ was discovered by Giesekind in 1912.

The orientable $2$-sheeted cover $\boN'$ of the Giesekind manifold is
diffeomorphic to the complement of the figure eight knot in $\S^3$ ; hence is a
complete hyperbolic manifold of finite volume. In \cite{Thu}, Thurston explains how
$\boN'$ is homeomoprhic to the complement of the figure eight knot (see also
\cite{Fra}).

We construct an embedded compact minimal surface in $\boN$ that lifts to a
surface of genus $3$ in $\boN'$.

The geodesics from each vertex of $T$ to its opposite face, all meet at one
point $p$ in $T$. Join $p$ to each edge of $T$ by the minimizing geodesic. Also
join $p$ to each vertex of $T$ by a geodesic. This produces the edges of a
tessellation of $T$ by $24$ congruent tetrahedra.

Consider the tetrahedron $T_1$ of this tessellation as in
Figure~\ref{fig:tessel}. By a conjugate
surface technique, Polthier proved there exists an embedded minimal disk $D_1$
in $T_1$ meeting the boundary of $T_1$ orthogonally as in
Figure~\ref{fig:polthier}. Symmetry by
the faces of $T_1$ (and the faces of the symmetric tetrahedron of the
tessellation of $T$) extend $D_1$ to an embedded minimal surface $S$ meeting
each face of $T$ in one embedded Jordan curve in the interior of the face. $S$
is topologically a sphere minus $4$ points.

The face identification on $T$ send $S\cap A$ to $S\cap B$ and $S\cap D$ to
$S\cap C$. Hence $S$ passes to the quotient in $\boN$ to a compact embedded
minimal surface whose topology is the connected sum of two Klein bottles. The
lift of this to $\boN'$ is a genus $3$ compact embedded minimal surface.

A Seifert surface for the figure eight knot is an incompressible surface
homeomorphic to a once punctured torus. Applying Theorem~\ref{th:isotopy} gives
a properly embedded minimal one punctured torus in the complement of the figure
eight knot. 

Theorem $4$ of C.~Adams \cite{Ada} yields many totally geodesic properly
embedded $3$-punctured spheres in complete hyperbolic $3$-manifolds $\boN$ of
finite volume. Suppose $\boN$ arises as a link or knot complement that contains
an embedded incompressible $3$-punctured sphere (so by Adams, it is isotopic to
a totally geodesic one). For example if the link or the knot contains a part as
in Figure~\ref{fig:disk} such that the disk $D$ with $2$ punctures is a $3$-punctured
incompressible sphere in $\boN$. An example is the Whitehead link
(Figure~\ref{fig:whitehead}).

\begin{figure}[h]
\begin{center}
\subfloat[\label{fig:disk}]{\resizebox{0.4\linewidth}{!}{\input{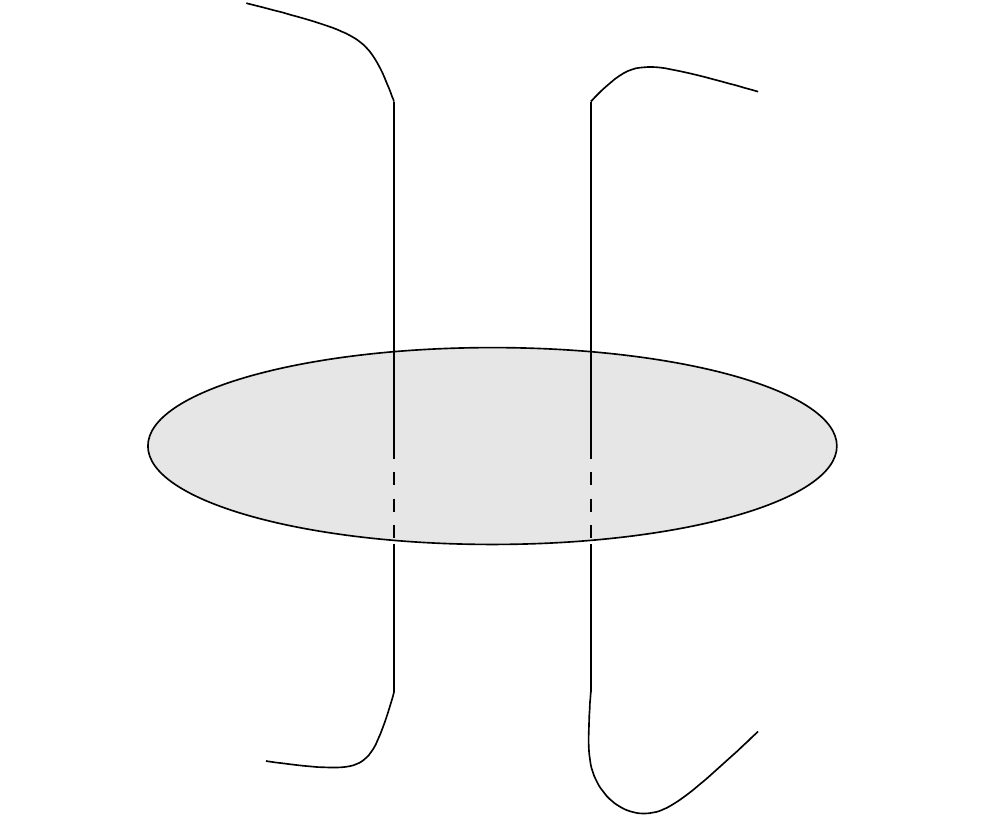_t}}}
\subfloat[\label{fig:whitehead}]{\resizebox{0.4\linewidth}{!}{\input{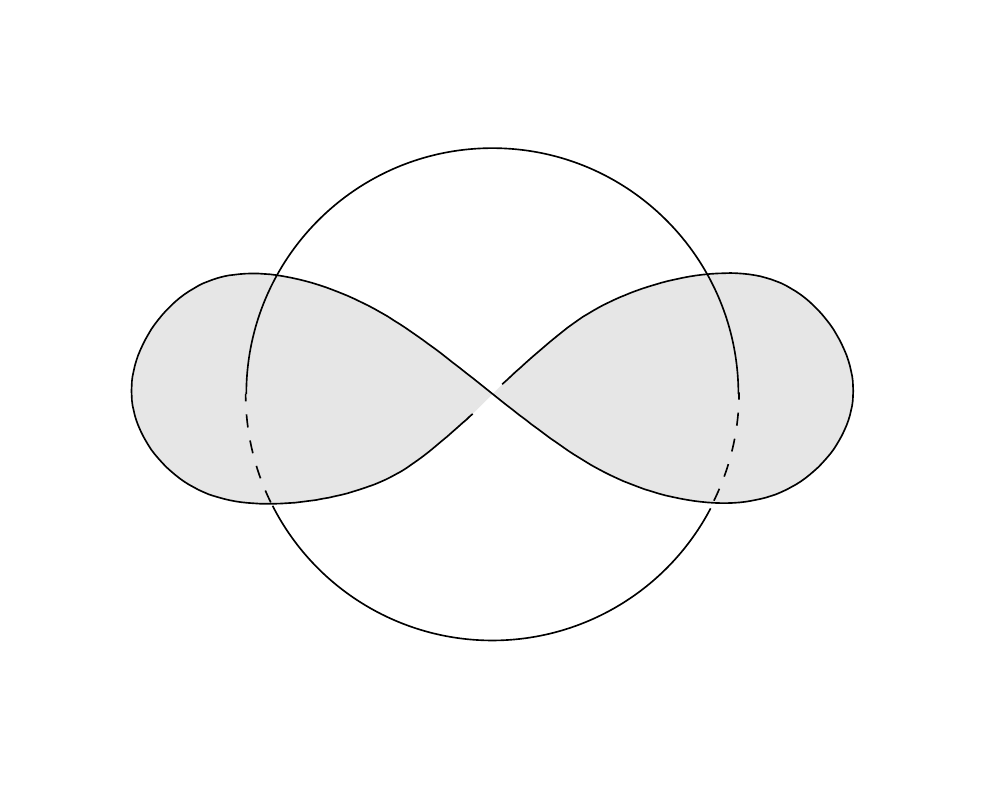_t}}}
\caption{Incompressible $3$-punctured sphere in general position and in the
complement of Whitehead link}
\end{center}
\end{figure}

The Borromean rings is also a hyperbolic link. Its complement contains an embedded
incompressible thrice punctured sphere (Figure~\ref{fig:3spher}) and an embedded
once punctured torus (Figure~\ref{fig:1torus}) which is isotopic to a properly
embedded minimal once punctured torus by Theorem~\ref{th:isotopy}.

\begin{figure}[h]
\begin{center}
\subfloat[\label{fig:3spher}]{\resizebox{0.4\linewidth}{!}{\input{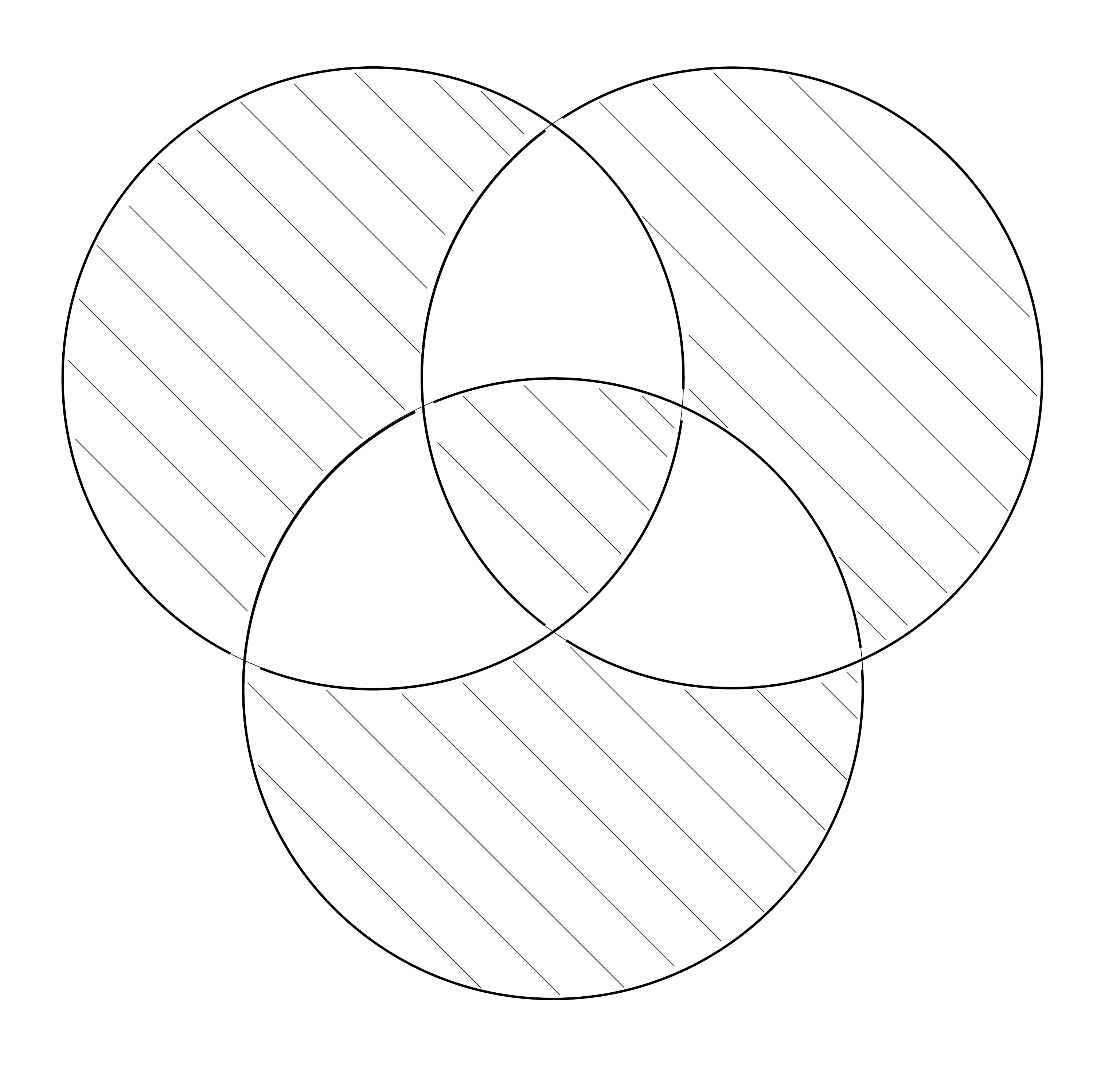_t}}}
\subfloat[\label{fig:1torus}]{\resizebox{0.4\linewidth}{!}{\input{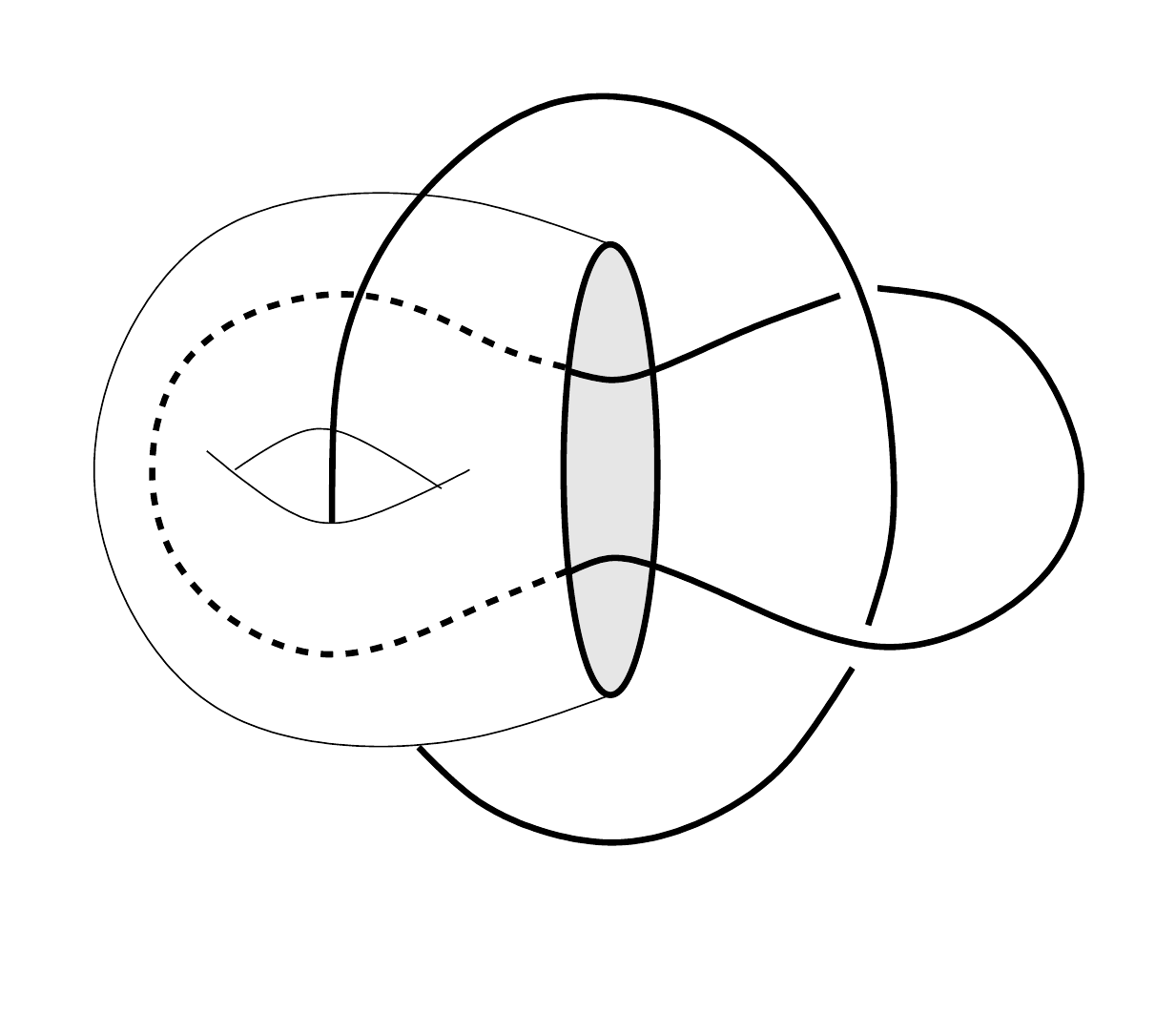_t}}}\\
\subfloat[\label{fig:eight}]{\resizebox{0.6\linewidth}{!}{\input{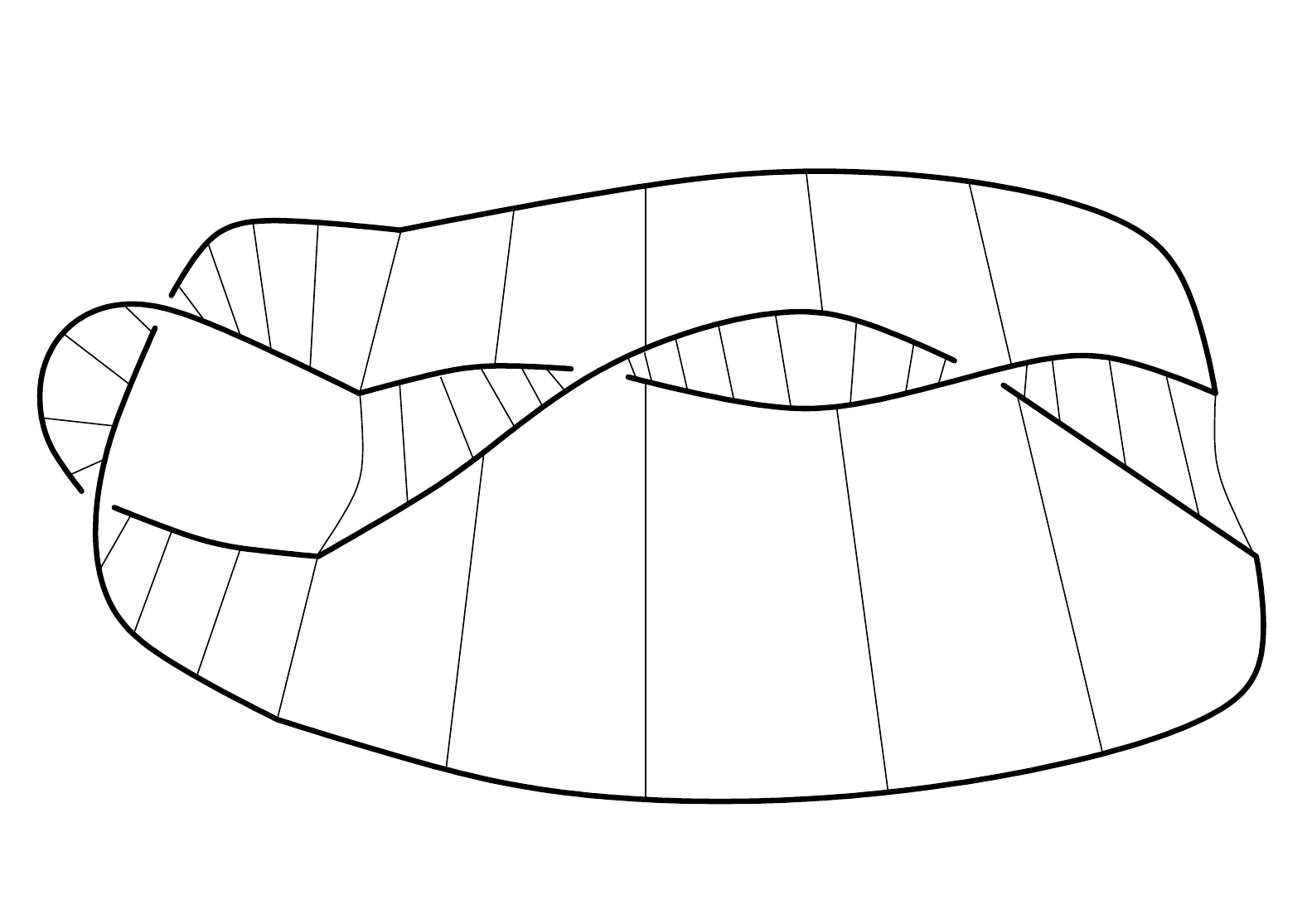_t}}}
\caption{Incompressible $3$-punctured sphere and $1$-punctured torus in the
complement of Borromean rings and an incompressible $1$-punctured torus in the
figure eight knot complement}
\end{center}
\end{figure}

It will be interesting to estimate the areas of the minimal surfaces obtained by
Theorems A, B and \ref{th:isotopy} as in Remark~\ref{rem:estim}. For examples,
consider the figure eight knot complement $\boN$. We know there is a properly
embedded minimal once punctured torus $\Sigma$ in $\boN$ by
Theorem~\ref{th:isotopy} (Figure~\ref{fig:eight}). The Finite Total Curvature
Theorem \ref{th:ftc} and the Gauss equation tells us the area of $\Sigma$ is
strictly less than $2\pi$ (there are no embedded totally geodesic
surfaces in $\boN$).

What is the area of $\Sigma$ ? What is the properly embedded, non compact,
minimal surface of smallest area (it exists) in $\boN$ ? And in all such
manifolds $\boN$ ?

\bibliographystyle{plain}
\bibliography{../reference.bib}

\end{document}